\documentclass[11pt,twoside]{preprint}
\usepackage[a4paper,innermargin=1.2in,outermargin=1.2in,bottom=1.5in,marginparwidth=1.5in,marginparsep=3mm]{geometry}
\usepackage{amsmath,amsthm,amssymb,enumerate}
\usepackage{hyperref}
\usepackage{mathrsfs}
\usepackage{breakurl}
\usepackage{tikz}

\usepackage[osf]{newtxtext}
\usepackage[varqu,varl]{inconsolata}
\usepackage[cal=boondoxo]{mathalfa}
\usepackage[bigdelims,vvarbb,cmintegrals]{newtxmath}

\usepackage{xcolor}
\usepackage{mhequ}
\usepackage{comment}
\usepackage{microtype}

\colorlet{darkblue}{blue!90!black}
\colorlet{darkred}{red!90!black}

\setcounter{MaxMatrixCols}{25}
\setcounter{tocdepth}{2}

\makeatletter
\newenvironment{claim}[1][MM]
               {\list{$\bullet$}{%
  \setbox\@tempboxa\hbox{#1}\@tempdima\wd\@tempboxa%
  \setlength{\labelwidth}{\@tempdima}
  \advance\@tempdima by 1em%
  \setlength{\leftmargin}{\@tempdima}
  \setlength{\parsep}{1mm}\setlength{\itemindent}{0mm}%
  \setlength{\labelsep}{2mm}\setlength{\itemsep}{0mm}%
  \setlength{\topsep}{1mm}%
}}{\endlist}
\makeatother

\newtheorem{theorem}{Theorem}[section]
\newtheorem{lemma}[theorem]{Lemma}

\newtheorem{corollary}[theorem]{Corollary}

\theoremstyle{definition}
\newtheorem{assumption}[theorem]{Assumption}

\newtheorem{example}[theorem]{Example}

\theoremstyle{remark}
\newtheorem{remark}[theorem]{Remark}

\def\scal#1{\langle\, #1 \,\rangle}
\def\sscal#1{\langle\!\langle\, #1 \,\rangle\!\rangle}
\def\app#1{\stackrel{\text{(#1)}}{\approx}}

\newcommand{\vn}[1]{{\vert\kern-0.23ex\vert\kern-0.23ex\vert #1 
    \vert\kern-0.23ex\vert\kern-0.23ex\vert}}

\usetikzlibrary{snakes}
\usetikzlibrary{decorations}
\usetikzlibrary{positioning}
\usetikzlibrary{shapes}

\makeatletter
\pgfdeclareshape{crosscircle}
{
  \inheritsavedanchors[from=circle] 
  \inheritanchorborder[from=circle]
  \inheritanchor[from=circle]{north}
  \inheritanchor[from=circle]{north west}
  \inheritanchor[from=circle]{north east}
  \inheritanchor[from=circle]{center}
  \inheritanchor[from=circle]{west}
  \inheritanchor[from=circle]{east}
  \inheritanchor[from=circle]{mid}
  \inheritanchor[from=circle]{mid west}
  \inheritanchor[from=circle]{mid east}
  \inheritanchor[from=circle]{base}
  \inheritanchor[from=circle]{base west}
  \inheritanchor[from=circle]{base east}
  \inheritanchor[from=circle]{south}
  \inheritanchor[from=circle]{south west}
  \inheritanchor[from=circle]{south east}
  \inheritbackgroundpath[from=circle]
  \foregroundpath{
    \centerpoint%
    \pgf@xc=\pgf@x%
    \pgf@yc=\pgf@y%
    \pgfutil@tempdima=\radius%
    \pgfmathsetlength{\pgf@xb}{\pgfkeysvalueof{/pgf/outer xsep}}%
    \pgfmathsetlength{\pgf@yb}{\pgfkeysvalueof{/pgf/outer ysep}}%
    \ifdim\pgf@xb<\pgf@yb%
      \advance\pgfutil@tempdima by-\pgf@yb%
    \else%
      \advance\pgfutil@tempdima by-\pgf@xb%
    \fi%
    \pgfpathmoveto{\pgfpointadd{\pgfqpoint{\pgf@xc}{\pgf@yc}}{\pgfqpoint{-0.707107\pgfutil@tempdima}{-0.707107\pgfutil@tempdima}}}
    \pgfpathlineto{\pgfpointadd{\pgfqpoint{\pgf@xc}{\pgf@yc}}{\pgfqpoint{0.707107\pgfutil@tempdima}{0.707107\pgfutil@tempdima}}}
    \pgfpathmoveto{\pgfpointadd{\pgfqpoint{\pgf@xc}{\pgf@yc}}{\pgfqpoint{-0.707107\pgfutil@tempdima}{0.707107\pgfutil@tempdima}}}
    \pgfpathlineto{\pgfpointadd{\pgfqpoint{\pgf@xc}{\pgf@yc}}{\pgfqpoint{0.707107\pgfutil@tempdima}{-0.707107\pgfutil@tempdima}}}
  }
}
\makeatother

\colorlet{symbols}{black!50}
\definecolor{connection}{rgb}{0.7,0.1,0.1}
\definecolor{lblue}{rgb}{0.1,0.5,1}
\tikzset{
root/.style={circle,fill=black!50,inner sep=0pt, minimum size=3mm},
        dot/.style={circle,fill=black,inner sep=0pt, minimum size=1.5mm},
        empty/.style={circle,fill=white,inner sep=0pt, minimum size=1.5mm},
        bdot/.style={circle,fill=lblue,inner sep=0pt, minimum size=0.95mm},
        var/.style={circle,fill=black!10,draw=black,inner sep=0pt, minimum size=3mm},
        kernel/.style={semithick,shorten >=2pt,shorten <=2pt},
        kernels/.style={snake=zigzag,shorten >=2pt,shorten <=2pt,segment amplitude=1pt,segment length=4pt,line before snake=2pt,line after snake=5pt,},
        bkernel/.style={very thin,snake=zigzag,draw=lblue,shorten >=2pt,shorten <=2pt,segment amplitude=1.4pt,segment length=3.5pt,},
        rho/.style={densely dashed,semithick,shorten >=2pt,shorten <=2pt},
           testfcn/.style={dotted,semithick,shorten >=2pt,shorten <=2pt},
        renorm/.style={shape=circle,fill=white,inner sep=1pt},
        labl/.style={shape=rectangle,fill=white,inner sep=1pt},
        xic/.style={very thin,circle,fill=symbols,draw=black,inner sep=0pt,minimum size=1.2mm},
        Z/.style={very thin,rectangle,fill=blue!50,draw=black,inner sep=0pt,minimum size=1.2mm},
        Z2/.style={very thin,rectangle,fill=red!40,draw=black,inner sep=0pt,minimum size=1.2mm},
        xi/.style={very thin,circle,fill=blue!10,draw=black,inner sep=0pt,minimum size=1.1mm},
        arg/.style={very thin,rectangle,fill=blue!50,draw=black,inner sep=0pt,minimum size=1.2mm},
        xix/.style={crosscircle,fill=blue!10,draw=black,inner sep=0pt,minimum size=1.2mm},
	xib/.style={very thin,circle,fill=blue!10,draw=black,inner sep=0pt,minimum size=1.6mm},
	sqb/.style={very thin,rectangle,fill=blue!10,draw=black,densely dotted,inner sep=0pt,minimum size=1.6mm},
	xie/.style={very thin,circle,fill=green!50!black,draw=black,inner sep=0pt,minimum size=1.6mm},
	xid/.style={very thin,circle,fill=symbols,draw=black,inner sep=0pt,minimum size=1.6mm},
	xibx/.style={crosscircle,fill=blue!10,draw=black,inner sep=0pt,minimum size=1.6mm},
	kernels2/.style={very thick,draw=connection,segment length=12pt},
	kernels3/.style={very thick,draw=green!80!black,segment length=12pt},
	kernels4/.style={thick,draw=blue!70!black,segment length=12pt},
	dotkernel/.style={dashed,draw=black},
	not3/.style={thin,circle,draw=green!80!black,fill=green!80!black,inner sep=0pt,minimum size=0.5mm},
	not/.style={thin,circle,fill=symbols,draw=connection,fill=connection,inner sep=0pt,minimum size=0.5mm},
	>=stealth,
        }

\makeatletter
\def\DeclareSymbol#1#2#3{\expandafter\gdef\csname MH@symb@#1\endcsname{\tikz[baseline=#2,scale=0.15,draw=symbols,line join=round]{#3}}\expandafter\gdef\csname MH@symb@#1s\endcsname{\scalebox{0.7}{\tikz[baseline=#2,scale=0.15,draw=symbols,line join=round]{#3}}}}
\def\<#1>{\csname MH@symb@#1\endcsname}
\makeatother


\DeclareSymbol{0}{-2.8}{\node[xib] {};}

\DeclareSymbol{0x}{-2.8}{\node[xibx] {};}

\DeclareSymbol{1}{0}{\draw (0,0) node[not] {} -- (0,1.5) node[xi] {};}

\DeclareSymbol{r1}{0}{\draw[kernels2] (0,0) node[not] {} -- (0,1.5) node[xi] {};}

\DeclareSymbol{2}{-2}{\draw[kernels2] (-1,1) node[xi] {} -- (0,0) node[not] {} -- (1,1) node[xi] {};}

\DeclareSymbol{I2}{-3}{\draw[kernels2] (-1,1) node[xi] {} -- (0,0) node[not] {} -- (1,1) node[xi] {}; \draw (0,0) node[not] {} -- (0,-1) node[not] {};}

\DeclareSymbol{3}{-2}{\draw (0,-0.25) node[xi] {} -- (-1,1) node[xi] {};}

\DeclareSymbol{4}{-1}{\draw[kernels2] (-1,1) node[xi] {} -- (0,0) {}; \draw (0,0) node[not] {} -- (1,1) node[xi] {};}

\DeclareSymbol{5}{-3}{\draw[kernels2] (-1,1) node[xi] {} -- (0,0) node[not] {}; \draw (0,0) node[not] {} -- (0,-1.4) node[xi] {};}

\DeclareSymbol{6}{-3}{\draw (-1,1) node[xi] {} -- (0,0) {}; \draw[kernels2] (0,0) node[not] {} -- (0,-1.4) node[xi] {};}

\DeclareSymbol{7}{0.5}{\draw[kernels2] (-1,1)  -- (0,2) node[xi] {};
\draw[kernels2] (-1,1) node[not] {} -- (0,0) node[not] {} -- (1,1) node[xi] {};}

\DeclareSymbol{9}{0.5}{\draw[kernels2] (0,0)  -- (-1,1) node[xi] {};
\draw (0,1.4) node[xi] {} -- (0,0) node[not] {} -- (1,1) node[xi] {};}

\DeclareSymbol{3x}{-2}{\draw (0,-0.25) node[xix] {} -- (-1,1) node[xi] {};}

\DeclareSymbol{3x2}{-2}{\draw (0,-0.25) node[xi] {} -- (-1,1) node[xix] {};}

\DeclareSymbol{2x}{-2}{\draw[kernels2] (-1,1) node[xi] {} -- (0,0) node[not] {} -- (1,1) node[xi] {};
\draw (-1,0.95) node[not] {} -- (-1,1) node[xix] {};}

\DeclareSymbol{8}{-1}{\draw (-1,1) node[xi] {} -- (0,0) node[not] {} -- (1,1) node[xi] {};}

\DeclareSymbol{I2}{-4}{\draw[kernels2] (-1,1) node[xi] {} -- (0,0) node[not] {} -- (1,1) node[xi] {}; \draw (0,0) node[not] {} -- (0,-1.5) node[not] {}; }

\DeclareSymbol{I3}{-4}{\draw (0,-0.25) node[xi] {} -- (-1,1) node[xi] {};\draw (0,-0.25) node[xi] {} -- (0,-1.5) node[not] {};}

\DeclareSymbol{rI2}{-4}{\draw[kernels2] (-1,1) node[xi] {} -- (0,0) node[not] {} -- (1,1) node[xi] {}; \draw[kernels2] (0,0) node[not] {} -- (0,-1.5) node[not] {}; }

\DeclareSymbol{rI3}{-4}{\draw (0,-0.25) node[xi] {} -- (-1,1) node[xi] {};\draw[kernels2] (0,-0.25) node[xi] {} -- (0,-1.5) node[not] {};}

\DeclareSymbol{tmp}{-1}{\draw (-1,1) node[xi] {} -- (0,0) node[xi] {} -- (1,1) node[xi] {};}

\DeclareSymbol{II}{0.5}{\draw (-1,1)  -- (0,2) node[xi] {};
\draw (-1,1) node[xi] {} -- (0,0) node[xi] {};}

\DeclareSymbol{Z}{-2.8}{\node[Z] {};}

\DeclareSymbol{Z2}{-2.8}{\node[Z2] {};}

\DeclareSymbol{black}{-2.8}{\draw (-1,0) {} -- (1,0)}
\DeclareSymbol{blue}{-2.8}{\draw[kernels4] (-1,0) {} -- (1,0)}
\DeclareSymbol{red}{-2.8}{\draw[kernels2] (-1,0) {} -- (1,0)}
\DeclareSymbol{green}{-2.8}{\draw[kernels3] (-1,0) {} -- (1,0)}

\DeclareSymbol{ZI}{-0.5}{\draw (0,1.5) node[Z] {} -- (0,0) node[not]{};}

\DeclareSymbol{Z2I}{-0.5}{\draw (0,1.5) node[Z2] {} -- (0,0) node[not]{};}

\DeclareSymbol{ZJ}{-0.5}{\draw[kernels2] (0,1.5) node[Z] {} -- (0,0) node[not]{};}

\DeclareSymbol{Z2J}{-0.5}{\draw[kernels2] (0,1.5) node[Z2] {} -- (0,0) node[not]{};}

\DeclareSymbol{ZZ2}{-0.5}{\draw[kernels2] (-1,1) node[Z] {} -- (0,0) node[not]{} -- (1,1) node[Z2]{};}

\DeclareSymbol{ZIZ2I}{-0.5}{\draw (-1,1) node[Z] {} -- (0,0) node[not]{} -- (1,1) node[Z2]{};}

\DeclareSymbol{ZIZ2J}{-0.5}{\draw (-1,1) node[Z] {} -- (0,0) node[not]{}; \draw[kernels2] (0,0) node[not]{} -- (1,1) node[Z2]{};}

\DeclareSymbol{ZA}{-0.5}{\draw (-1,1) node[Z] {} -- (0,0) node[xi]{};}

\DeclareSymbol{ZM}{-0.5}{\draw (-1,1) node[xi] {} -- (0,0) node[Z]{};}

\DeclareSymbol{ZB}{-0.5}{\draw[kernels2] (-1,1) node[Z] {} -- (0,0) node[not]{} -- (1,1) node[xi]{};}

\DeclareSymbol{ZAM}{-0.5}{\draw (-1,1) node[Z] {} -- (0,0) node[xi]{} -- (1,1) node[xi]{};}

\DeclareSymbol{ZAA}{-0.5}{\draw (-1,1) node[Z] {} -- (0,0) node[xi]{} -- (-1,-1) node[xi]{};}

\DeclareSymbol{ZAB}{-0.5}{\draw (1,1) node[Z] {} -- (0,0) node[xi]{};
\draw[kernels2] (0,0) node[xi]{} -- (1,-1) node[not]{} -- (2,0) node[xi]{};}

\DeclareSymbol{ZBA}{-0.5}{\draw[kernels2] (-1,1) node[Z] {} -- (0,0) node[not]{} -- (1,1) node[xi]{};
\draw (0,0) node[not]{} -- (0,-1.4) node[xi]{};}

\DeclareSymbol{ZBB}{-0.5}{\draw[kernels2] (-1,1) node[Z] {} -- (0,0) node[not]{} -- (1,1) node[xi]{};
\draw[kernels2] (0,0) node[not]{} -- (1,-1) node[not]{} -- (2,0) node[xi]{};}

\DeclareSymbol{ZIM}{-0.5}{\draw (-1,1) node[Z] {} -- (0,0) node[not]{} -- (1,1) node[xi]{};}

\DeclareSymbol{0IM}{-0.5}{\draw (-1,1) node[xi] {} -- (0,0) node[not]{} -- (1,1) node[xi]{};}

\DeclareSymbol{0JM}{-0.5}{\draw[kernels2] (1,1) node[xi] {} -- (0,0) node[not]{};\draw (0,0) node[not]{} -- (-1,1) node[xi]{};}

\DeclareSymbol{ZBM}{-0.5}{\draw (0,1.5) node[xi] {} -- (0,0) {};
\draw[kernels2] (-1,1) node[Z] {} -- (0,0) node[not]{} -- (1,1) node[xi]{};}

\DeclareSymbol{SZ}{-0.5}{\draw (0,1.5) node[Z] {} -- (0,0) {};
\draw[kernels2] (-1,1) node[xi] {} -- (0,0) node[not]{} -- (1,1) node[xi]{};}

\DeclareSymbol{ZJM}{-0.5}{\draw (-1,1)[kernels2] node[Z] {} -- (0,0) {}; \draw (0,0) node[not]{} -- (1,1) node[xi]{};}

\DeclareSymbol{ZIJM}{-0.5}{\draw (1,1) node[Z] {} -- (0,0) {}; \draw[kernels2] (0,0) node[not]{} -- (-1,1) node[xi]{};}

\DeclareSymbol{AAA}{-0.5}{\draw (0,0) node[xi] {} -- (-1,1) node[xi] {} -- (0,2) node[xi] {};
\draw (0,0) node[xi] {} -- (-1,-1) node[xi] {};}

\DeclareSymbol{AAB}{-0.5}{\draw (0,0) node[xi] {} -- (-1,1) node[xi] {} -- (0,2) node[xi] {};
\draw[kernels2] (0,0) node[xi] {} -- (-1,-1) node[not] {} -- (-2,0) node[xi] {};}

\DeclareSymbol{AAM}{0.5}{\draw (-1,1)  -- (0,2) node[xi] {};
\draw (-1,1) node[xi] {} -- (0,0) node[xi] {} -- (1,1) node[xi] {};}

\DeclareSymbol{ABA}{0.5}{\draw (-1,1) -- (0,2) node[xi] {};
\draw[kernels2] (-1,1) node[xi] {} -- (0,0) node[not] {} -- (1,1) node[xi] {};
\draw (0,0) node[not] {} -- (-1,-1) node[xi] {};}

\DeclareSymbol{ABB}{0.5}{\draw (-1,1) -- (0,2) node[xi] {};
\draw[kernels2] (-1,1) node[xi] {} -- (0,0) node[not] {} -- (1,1) node[xi] {};
\draw[kernels2] (0,0) {} -- (-1,-1) node[not] {} -- (-2,0) node[xi] {};}

\DeclareSymbol{ABM}{0}{\draw (0,1.8) node[xi] {} -- (0,0.4) {};
\draw[kernels2] (-1,0) node[xi] {} -- (0,-1) node[not] {} -- (0,0.4) node[xi] {};
\draw (0,-1) node[not] {} -- (1,0) node[xi] {};}

\DeclareSymbol{AMA}{-4}{\draw (-1,1) node[xi] {} -- (0,0) node[xi] {} -- (1,1) node[xi] {};
\draw (0,0) node[xi] {} -- (0,-1.4) node[xi] {};}

\DeclareSymbol{AMB}{-4}{\draw (-1,1) node[xi] {} -- (0,0) node[xi] {} -- (1,1) node[xi] {};
\draw[kernels2] (0,0) node[xi] {} -- (0,-1.5) node[not] {} -- (1,-0.4) node[xi] {};}

\DeclareSymbol{AMM}{0.5}{\draw (0,0)  -- (0,1.4) node[xi] {};
\draw (-1,1) node[xi] {} -- (0,0) node[xi] {} -- (1,1) node[xi] {};}

\DeclareSymbol{BAA}{-4}{\draw[kernels2] (-1,1) node[xi] {} -- (0,0) node {} -- (1,1) node[xi] {};
\draw (0,0) node[not] {} -- (1,-1) node[xi] {} -- (0,-2) node[xi] {};}

\DeclareSymbol{BAB}{-4}{\draw[kernels2] (-1,1) node[xi] {} -- (0,0) node {} -- (1,1) node[xi] {};
\draw (0,0) node[not] {} -- (1,-1) node[xi] {};
\draw[kernels2] (1,-1) node[xi] {} -- (0,-2) node[not] {} -- (-1,-1) node[xi] {} ;}

\DeclareSymbol{BAM}{-4}{\draw[kernels2] (-1,1) node[xi] {} -- (0,0) node {} -- (1,1) node[xi] {};
\draw (0,0) node[not] {} -- (0,-1.5) node[xi] {} -- (1,-0.4) node[xi] {};}

\DeclareSymbol{BBA}{-4}{\draw[kernels2] (-1,1) node[xi] {} -- (0,0) node {} -- (1,1) node[xi] {};
\draw[kernels2] (0,0) node[not] {} -- (1,-1) {} -- (2,0) node[xi] {};
\draw (1,-1) node[not] {} -- (1,-2) node[xi] {};}

\DeclareSymbol{BBB}{-4}{\draw[kernels2] (-1,1) node[xi] {} -- (0,0) node {} -- (1,1) node[xi] {};
\draw[kernels2] (0,0) node[not] {} -- (1,-1) {} -- (2,0) node[xi] {};
\draw[kernels2] (1,-1) node[not] {} -- (2,-2) node[not] {} -- (3,-1) node[xi] {};}

\DeclareSymbol{BBM}{-4}{\draw[kernels2] (-1,1) node[xi] {} -- (0,0) node {} -- (1,1) node[xi] {};
\draw[kernels2] (0,0) node[not] {} -- (0,-1.4) {} -- (1,-0.4) node[xi] {};
\draw (0,-1.4) node[not] {} -- (-1,-0.4) node[xi] {};}

\DeclareSymbol{BMA}{-4}{\draw (0,0) -- (1,1) node[xi] {}; \draw[kernels2] (-1,1) node[xi] {} -- (0,0) node[not] {} -- (0,1.4) node[xi] {};
\draw (0,0) node[not] {} -- (0,-1.4) node[xi] {};}

\DeclareSymbol{BMB}{-4}{\draw (0,0) -- (1,1) node[xi] {}; \draw[kernels2] (-1,1) node[xi] {} -- (0,0) node[not] {} -- (0,1.4) node[xi] {};
\draw[kernels2] (0,0) -- (0,-1.5) node[not] {} -- (1,-0.4) node[xi] {};}

\DeclareSymbol{BMM}{0.5}{\draw (-1.2,0.6) node[xi]{}  -- (0,0) node[not] {} -- (1.2,0.6) node[xi]{};
\draw[kernels2] (-0.5,1.3) node[xi] {} -- (0,0) node[not] {} -- (0.5,1.3) node[xi] {};}

\DeclareSymbol{S1}{0}{\draw (-1,1.4) node[xi] {} -- (-1,0) {};
\draw (1,1.4) node[xi] {} -- (1,0) {};
\draw[kernels2] (-1,0) node[xi] {} -- (0,-1) node[not] {} -- (1,0) node[xi] {};}

\DeclareSymbol{ABMvar}{0}{\draw (1,1.4) node[xi] {} -- (1,0) {};
\draw[kernels2] (1,0) node[xi] {} -- (0,-1) node[not] {} -- (0,0.4) node[xi] {};
\draw (0,-1) node[not] {} -- (-1,0) node[xi] {};}

\DeclareSymbol{BBMvar}{0}{\draw[kernels2] (0.5,1.4) node[xi] {} -- (1,0) {} -- (1.5,1.4) node[xi] {};
\draw[kernels2] (1,0) node[not] {} -- (0,-1) node[not] {} -- (0,0.4) node[xi] {};
\draw (0,-1) node[not] {} -- (-1,0) node[xi] {};}

\DeclareSymbol{S2}{0}{\draw (-1,1.4) node[xi] {} -- (-1,0) {};
\draw[kernels2] (0.5,1.4) node[xi] {} -- (1,0) {} -- (1.5,1.4) node[xi] {};
\draw[kernels2] (-1,0) node[xi] {} -- (0,-1) node[not] {} -- (1,0) node[not] {};}

\DeclareSymbol{S2var}{0}{\draw (1,1.4) node[xi] {} -- (1,0) {};
\draw[kernels2] (-1.5,1.4) node[xi] {} -- (-1,0) {} -- (-0.5,1.4) node[xi] {};
\draw[kernels2] (-1,0) node[not] {} -- (0,-1) node[not] {} -- (1,0) node[xi] {};}

\DeclareSymbol{S3}{0}{\draw[kernels2] (-0.5,1.4) node[xi] {} -- (-1,0) {} -- (-1.5,1.4) node[xi] {};
\draw[kernels2] (1.5,1.4) node[xi] {} -- (1,0) {} -- (0.5,1.4) node[xi] {};
\draw[kernels2] (-1,0) node[not] {} -- (0,-1) node[not] {} -- (1,0) node[not] {};}

\DeclareSymbol{S4}{0}{\draw (0,1.8) node[xi] {} -- (0,0.4) {};
\draw[kernels2] (-1,0) node[xi] {} -- (0,-1) node[not] {} -- (1,0) node[xi] {};
\draw (0,-1) node[not] {} -- (0,0.4) node[xi] {};}

\DeclareSymbol{S5}{-4}{\draw[kernels2] (-1,1) node[xi] {} -- (0,0) node {} -- (1,1) node[xi] {};
\draw (0,0) node[not] {} -- (0,-1.4) {};
\draw[kernels2] (1,-0.4) node[xi] {} -- (0,-1.4) node[not] {} -- (-1,-0.4) node[xi] {};} 


\DeclareSymbol{rev1}{-4}{\draw[kernels4] (-1,1) node[xi] {} -- (0,0) node {};
\draw[kernels2] (0,0) {} -- (1,1) node[xi] {};
\draw[kernels3] (0,0) node[not3] {} -- (0,-1.4) node[not] {};
\draw[kernels2] (1,-0.4) node[xi] {} -- (0,-1.4) node[not] {} -- (-1,-0.4) node[xi] {} ;}

\DeclareSymbol{rev2}{-4}{\draw[kernels4] (-1,1) node[xi] {} -- (0,0) node {};
\draw[kernels2] (0,0) {} -- (1,1) node[xi] {};
\draw[kernels3] (0,0) node[not3] {} -- (0,-1.4) node[not] {};
\draw (1,-0.4) node[xi] {} -- (0,-1.4) node[xi] {} ;}

\DeclareSymbol{rev3}{0.5}{\draw (-1.2,0.6) node[xi]{}  -- (0,0) {};
\draw[kernels4] (0,0) {} -- (-0.5,1.3) node[xi]{};
\draw[kernels2] (0.5,1.3) node[xi] {} -- (0,0) node[not] {} -- (1.2,0.6) node[xi] {};}

\DeclareSymbol{rev4}{-1}{\draw[kernels4] (-1,1) node[xi] {} -- (0,0) {};
\draw[kernels2] (1,1) node[xi] {} -- (0,0) node[not] {};}


\newcommand{\bn}[1]{{[\kern-0.5ex] #1 
    [\kern-0.5ex]}}
\newcommand\subtree{\mathrel{\ooalign{$\subset$\cr
  \hidewidth\raise.1ex\hbox{$\cdot\mkern.5mu$}\cr}}}

\allowdisplaybreaks

\newcommand\Z{\mathbb{Z}}

\newcommand\bone{\mathbf{1}}

\newcommand\cR{\mathcal{R}}

\newcommand\cK{\mathcal{K}}

\newcommand\cB{\mathcal{B}}
\newcommand\cA{\mathcal{A}}
\newcommand\cC{\mathcal{C}}

\newcommand\cN{\mathcal{N}}

\newcommand\cI{\mathcal{I}}

\newcommand\cM{\mathcal{M}}
\newcommand\cW{\mathcal{W}}

\newcommand\cF{\mathcal{F}}

\newcommand{\cT}{\mathcal{T}}

\newcommand\scD{\mathscr{D}}

\newcommand\scT{\mathscr{T}}

\newcommand\frI{\mathfrak{I}}

\def\eps{\varepsilon}

\def\PPi{\mathbf{\Pi}}

\newcommand{\R}{{\mathbb{R}}}
\newcommand{\T}{\mathbb{T}}
\newcommand{\E}{\mathbf{E}}
\newcommand{\N}{\mathbf{N}}

\newcommand{\rem}{(\ldots)}
\newcommand{\rarr}{\rightarrow}
\newcommand{\darr}{\downarrow}

\def\span{\mathop{\mathrm{span}}}
\def\Sym{\mathop{\mathrm{Sym}}}
\def\d{\partial}
\def\id{\mathrm{id}}

\begin{document}
\title{Nondivergence form quasilinear heat equations driven by space-time white noise}
\author{M\'at\'e Gerencs\'er}
\institute{IST Austria}
\maketitle
\begin{abstract}
We give a Wong-Zakai type characterisation of the solutions of quasilinear heat equations driven by space-time white noise in $1+1$ dimensions.
In order to show that the renormalisation counterterms are local in the solution,
a careful arrangement of a few hundred terms is required.
The main tool in this computation is a general `integration by parts' formula that provides a number of linear identities for the renormalisation constants.
\end{abstract}
\tableofcontents

\section{Introduction}\label{sec:intro}
The main goal of the present paper is to `solve' the equation
\begin{equ}\label{eq:main}
\d_t u - a(u)\d_x^2 u=\xi
\end{equ}
on $\T=\R/\Z$, locally in time,
with some initial condition $u(0,\cdot)=u_0(\cdot)$, where $a:\R\to\R$ is a sufficiently regular function ($\cC^{5}$ suffices) with values in $[\lambda,\lambda^{-1}]$ for some $\lambda>0$,
and $\xi$ is the space-time white noise.

While equation \eqref{eq:main} looks like a simple nonlinear variation of the stochastic heat equation, a major problem arises due to the fact that
the product $a(u)\d_x^2u$ is not actually meaningful for $u$ with parabolic regularity less than $1$. 
Since the white noise $\xi$ has regularity less than $-3/2$, any reasonable solution of \eqref{eq:main} should have no more regularity than $1/2$, making the interpretation of the product on the left-hand side, and thus the equation, far from obvious.
One might try a na\"ive approximation: take a nonnegative symmetric (under the involution $x \mapsto -x$) 
smooth function $\rho$ supported in the unit ball and integrating to 1,
set $\rho^\eps(t,x)=\eps^{-3}\rho(\eps^{-2} t,\eps^{-1}x)$, $\xi^\eps=\rho^\eps\ast\xi$, and solve \eqref{eq:main} with $\xi^\eps$ in place of $\xi$.
While this sequence of solutions does not converge, one can `renormalise' the divergencies as follows.

\begin{theorem}\label{thm:main}
Let $u_0\in\cC^{2\delta}(\T)$ for some $\delta\in(0,1/2)$.
Then for any $\rho$ as above
there exist deterministic smooth functions $C^\eps_\cdot,\bar C^\eps_\cdot,\tilde C^\eps_\cdot$ such that the following holds.
Let $u^\eps$ be the classical solution of
\begin{equ}\label{eq:main approx}
\d_tu^\eps-a(u^\eps)\d_x^2u^\eps=\xi^\eps
+C^\eps_{a(u^\eps)}a'(u^\eps)
+\bar C^\eps_{a(u^\eps)}(a')^3(u^\eps)
+\tilde C^\eps_{a(u^\eps)}(a'a'')(u^\eps)
\end{equ}
on $\T$ with initial condition $u^\eps(0,\cdot)=u_0(\cdot)$. 
There exist some (random) $T>0$ and $u\in\cC^{\delta}([0,T]\times \T)$ that do not depend on $\rho$, such that $u^\eps\to u$ in probability in $\cC^{\delta}([0,T]\times \T)$.
\end{theorem}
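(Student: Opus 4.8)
The plan is to realise \eqref{eq:main} as a fixed point problem inside a regularity structure built for the quasilinear operator $\partial_t-a(u)\partial_x^2$, to solve it against the renormalised (BPHZ) model, and then to transfer the result back to the PDE level, the decisive point being that the renormalisation counterterm obtained this way is exactly the local expression appearing in \eqref{eq:main approx}.

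\emph{Regularity structure, model, and fixed point.}
Because $a$ depends on the unknown, the relevant integration kernel is not fixed: I would work with the family $K^{\mathbf a}$ of truncated Green's functions of $\partial_t-\mathbf a\partial_x^2$, indexed by $\mathbf a\in[\lambda,\lambda^{-1}]$, and build a structure whose abstract integration maps and their $x$-derivatives act by convolution against $K^{\mathbf a}$ with the parameter $\mathbf a$ Taylor-expanded around the value of $a(u)$ at the base point; at base point $z$ this amounts to writing $\partial_t u-a(u(z))\partial_x^2u=\xi+\big(a(u)-a(u(z))\big)\partial_x^2u$, where, since $u\in\cC^{1/2-\kappa}$, the first correction already gains enough regularity that the order-$2$ operator $K^{\mathbf a}$ closes the equation. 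Subcriticality in the sense of regularity structures being immediate, the model is supplied by the now-standard machinery --- algebraic renormalisation and the renormalisation group of Bruned--Hairer--Zambotti, the analytic bounds of Chandra--Hairer, the stochastic estimates of Bruned--Chandra--Chevyrev--Hairer --- yielding the BPHZ model $\hat\Pi^\eps$ and, for each admissible $\rho$, its convergence in probability to a limiting model that does not depend on $\rho$. Lifting $a(u)$ to a modelled distribution $\mathbf A$ by composition, one then solves $U=\cP[\mathbf A]\big(\Xi+(\mathbf A-\langle\mathbf A\rangle)\partial_x^2U+\mathbf C^\eps\big)+Gu_0$ in a space $\cD^{\gamma,\eta}$ of singular modelled distributions (with $\cP[\mathbf A]$ the parametrised abstract heat operator, $Gu_0$ the initial data, $\mathbf C^\eps$ the abstract counterterms), by a contraction argument on a short random time interval uniform over bounded sets of models; $u^\eps:=\cR U$ then solves \eqref{eq:main approx}, and a standard continuation argument identifies it with the classical solution of \eqref{eq:main approx} up to time $T$.

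\emph{Locality of the counterterm --- the main obstacle.}
What gives \eqref{eq:main approx} content is that the counterterm is \emph{local}: a polynomial in $a'(u^\eps),a''(u^\eps),\dots$ with no derivatives of $u^\eps$ and no positive-degree factors, coefficients being smooth functions of $a(u^\eps)$. By the general description of the BPHZ counterterm in the form suitable for quasilinear equations, it is a finite sum $\sum_\tau c^\eps_\tau\,\Upsilon^a[\tau](u^\eps)$ over the (few hundred) trees $\tau$ of non-positive homogeneity, with $c^\eps_\tau$ the BPHZ character on $\tau$ divided by a symmetry factor and $\Upsilon^a[\tau]$ the elementary differential associated with $\tau$ for the nonlinearity $a$; since the edges carry the $\mathbf a$-expansion of $K^{\mathbf a}$, each $\Upsilon^a[\tau](u^\eps)$ is one such monomial. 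The task is to reorganise this sum. First, symmetry of $\rho$ under $x\mapsto-x$ annihilates every tree carrying an odd total number of spatial derivatives, which removes the bulk of the list. Second --- and this is the heart of the matter --- a general `integration by parts' identity for the constants $c^\eps_\tau$, reflecting both that $K^{\mathbf a}$ is the Green's function of $\partial_t-\mathbf a\partial_x^2$ and that it is even in $x$, produces linear relations among them that force all contributions with a derivative leg or a positive-homogeneity factor to cancel pairwise, collapsing the sum to exactly $C^\eps_{a(u^\eps)}a'(u^\eps)+\bar C^\eps_{a(u^\eps)}(a')^3(u^\eps)+\tilde C^\eps_{a(u^\eps)}(a'a'')(u^\eps)$ with $C^\eps,\bar C^\eps,\tilde C^\eps$ smooth in the frozen coefficient. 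I expect this step --- setting up the integration by parts formula at a level of generality covering all the trees, and then matching each of the several hundred terms to the correct monomial while checking the cancellations --- to be the principal difficulty, both conceptual and in sheer bulk.

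\emph{Passage to the limit.}
Finally, $C^\eps,\bar C^\eps,\tilde C^\eps$ are read off from the BPHZ character and so are genuine deterministic smooth functions of $\eps$ and of the frozen coefficient, while the $\rho$-independence of the limiting model --- hence of $u$ and of $T$ --- follows from the canonical nature of the BPHZ limit together with the continuity of the solution and reconstruction maps. Combined with the model convergence and the fixed point of the previous step, this gives $u^\eps\to u$ in probability in $\cC^\delta([0,T]\times\T)$ for a random $T>0$ not depending on $\rho$, which is Theorem~\ref{thm:main}.
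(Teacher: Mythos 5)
Your outline diverges from the paper in a way that leaves an essential step unjustified, and it also misidentifies where the real difficulty lies.

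\textbf{The fixed point.} You propose freezing the coefficient, writing $\d_t u-a(u(z))\d_x^2u=\xi+\big(a(u)-a(u(z))\big)\d_x^2u$, and solving the abstract analogue $U=\cP[\mathbf A]\big(\Xi+(\mathbf A-\langle\mathbf A\rangle)\scD^2U+\dots\big)$. But $\scD^2$ loses two degrees, so $\cP[\mathbf A]\circ\scD^2$ has zero net gain in the Schauder estimate; the factor $(\mathbf A-\langle\mathbf A\rangle)$, which merely vanishes at the base point, is not enough to make this a contraction in any of the usual $\cD^{\gamma,\eta}$ spaces, and subcriticality alone does not rescue it. This is precisely why the quasilinear problem does not reduce to a routine application of the semilinear machinery. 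The paper instead uses the transformation of \cite{GH_Q}, which rewrites \eqref{eq:main} as a \emph{system} \eqref{eq:main-t}: the offending $\d_x^2u$ term disappears entirely and is replaced by KPZ-type $(\d_x u)^2$ and $\d_x u$ terms multiplied by the new unknowns $v_c=I_c(a(u),\hat f)$, $v_{cc}$, $v_{cx}$, which are \emph{nonlocal} functionals of $u$. The abstract system \eqref{eq:system} is then well-posed for essentially the same reasons the generalised KPZ equation is, and nothing like your $\scD^2U$ appears on the right-hand side.

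\textbf{Where the difficulty actually sits.} Because the abstract unknowns include $V_c,V_{cc},V_{cx}$, the BPHZ counterterms a priori are polynomials not only in $a'(u),a''(u),\dots$ but also in the nonlocal quantities $v_c,v_{cc},v_{ccc},v_{cx}$ (see the explicit coefficients $p_c,\hat p_c,p_{cc}$ built from them in Section~\ref{sec:notations}). The content of the theorem is that after summing over all $\tau$ these nonlocal pieces cancel identically. Your description of the counterterm as ``$\sum_\tau c^\eps_\tau\,\Upsilon^a[\tau](u^\eps)$ with $\Upsilon^a[\tau]$ one such monomial'' already assumes locality and so begs the very question. You also cannot simply invoke BHZ/CH/BCCH as black boxes: the structure here is infinite-dimensional (each symbol $\tau$ carries a whole space $\cB_{[\tau]}$ of kernel-parameter data, with integration operators $\cI^\zeta$), and the paper explicitly notes in the Generalisations that the systematic algebra of \cite{BHZ,BCCH} does not transfer directly. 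You do correctly identify the integration-by-parts identities for the renormalisation constants as the key mechanism, and that part of your intuition matches Lemma~\ref{lem:Symmetry}; but the purpose they serve in the paper is the cancellation of the nonlocal $v$-dependent contributions, a problem your setup never produces and hence never solves.
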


In the case of semilinear SPDEs involving ill-defined products,
statements of the above kind on constructing renormalised solution theories have been plentiful in recent years, let us just mention the seminal works \cite{H0, GIP, Kup} from which most of them stem.
As for quasilinear equations,
slight variations of \eqref{eq:main} with noise regularity in $(-4/3,-1)$ were
considered around the same time in three different works \cite{OW, FGub, BDH}. The former was later extended to the regime $(-3/2,-1)$ in \cite{OWSS},
albeit only in the space-time periodic case. Removing the latter assumption in the regime $(-4/3,-1)$ or extending to more irregular noises (including space-time white noise as in our situation) is to our best knowledge work in progress \cite{Otto_Initial, OWSS2}.
We also remark that the divergence form version of \eqref{eq:main}, i.e. when $a(u)\d_x^2u$ is replaced by $\d_x(a(u)\d_x u)$, does not require the machinery of singular SPDEs, and has recently been treated in \cite{OWeb_Div2, OWeb_Div}.

A quite different approach was introduced in \cite{GH_Q}, which we will build on in the present article.
It relies on a transformation that brings \eqref{eq:main} to a form whose abstract counterpart in the language of regularity structures is relatively easily seen to be well-posed.
This argument is quite short and works for all range of noise regularity, and therefore provides a general solution theory.
In fact, the object $u$ from Theorem \ref{thm:main} that we will show to be the limit of $u^\eps$, is constructed in \cite{GH_Q}.
The drawback of this solution theory, however, is that it does not come with a natural approximation result, and therefore it is not a priori clear what, if anything, this abstract solution has to do with classical quasilinear PDEs.
Statements like Theorem \ref{thm:main} have the key role of relating the abstractly well-defined equation to classicaly well-defined equations.
It is actually natural to conjecture, but out of the scope of the current state of the theory, that this relation is `always' possible, as was proved in the semilinear case in \cite{BCCH}.

Let us now briefly outline what the source of difficulty is in obtaining such approximation results. 
To loosely recall the
transformation of \cite{GH_Q}
(its precise formulation is stated in Section \ref{sec:setup}),
the key observation is that quasilinear equations of the type \eqref{eq:main}
are (locally in time) equivalent to systems of the type
\begin{equ}
(u,v)=I\big(\hat F(\xi,u,v)\big),
\end{equ}
where $I$ is a convolution map satisfying certain Schauder estimates and $F$ is a subcritical nonlinearity.
In particular, $v$ is a nonlocal function of $u$.
This system can be also written abstractly within regularity structures:
\begin{equ}
(U,V)=\frI\big(\hat \cF(\Xi,U,V)\big),
\end{equ}
where the lift $\Xi$ of $\xi$ and the lift of $\hat \cF$ of $\hat F$ are as in \cite{H0},
and $\frI$ is the natural lift of $I$.
This already shows the first main issue: if one solves this equation with respect to a renormalised smooth model,
then the counterterms generated by the renormalisation will involve both $U$ and $V$.
Since in the renormalisation of the original equation one only expects to see local functions of the solution, we would then need that when reversing the transformation, the counterterms involving $V$ all magically disappear.

This is far from easy to verify: the number of these terms quickly blows up as the regularity of the noise decreases.
In the case of the space-time white noise, to calculate the counterterms at a single space-time point,
 the relevant dimension of the regularity structure is in the range of a few hundred.
It is worth noting that there is no symbolic cancellation between the terms that contribute to the renormalisation, and so the elimination of $V$ has to rely on cancellations between the renormalisation constants that different symbols generate.

This is our first main step: in Section \ref{sec:cancel}
we establish a number of symmetries that renormalisation constants satisfy.
This can be of interest on its own, for example one can
deduce the chain rule for the class of scalar-valued generalised KPZ equations from such cancellations, a question that goes back to \cite[Rem~1.14]{H0}.
Since such chain rule is part of a much more general study in the very recent work \cite{BGHZ}, we do not pursue this direction in any more detail here.
Armed with a sufficiently large class of cancellations, it then remains
to put them to use in simplifying the above mentioned large expression to the
form stated in Theorem \ref{thm:main}.
This is the main combinatorial task of the paper and is the content of Section \ref{sec:proof}.

Throughout the article we use concepts and terminology from the theory of regularity structures \cite{H0} without repeating any of the definitions, and to a low-level extent, from their renormalisation, see e.g. \cite[Sec~5]{H_Takagi} for a gentle introduction.

\subsection{Generalisations}
There are several directions for extensions of Theorem \ref{thm:main}. Some of them are immediate, some require mild improvement of known methods, and some would likely need new ideas.
\begin{claim}
\item The argument immediately extends to any Gaussian driving noise $\xi$ with regularity strictly above $-5/3$ and with compactly supported covariance function that satisfies the assumption of \cite[Sec~2.4]{CH}.
\item Instead of a spatially periodic setting, one can solve the equation with Dirichlet boundary conditions.
This direction for singular SPDEs was initiated in \cite{GH17}.
However, the application of its results is not completely automatic, as the construction of the extension $\hat\cR$ of the reconstruction operator $\cR$ below regularity $-1$ in highly nonlinear situation does require some work.
We believe that as long as one considers Dirichlet problems, this can be avoided, and everything above regularity $-2$ can be completely automatised.
A result of this flavor, but not of this generality, recently appeared in \cite[Sec~3]{Cyril_Ham}.
For Neumann boundary conditions such a statement is certainly not expected to hold.
In light of the results of \cite{GH17}, one in fact expects a boundary renormalisation to appear in the Neumann problem for \eqref{eq:main}.
\item For non-Gaussian noise, the regularity range $(-3/2,-1)$  would require a much simplified version of the computations in Section \ref{sec:proof}: instead of $17$ trees with $4$ noises, one needs to handle $6$ trees with $3$ noises.
When the regularity is between $-3/2$ and $-8/5$, one also gets an additional $6$ trees with $4$ noises, we briefly address this in Remark \ref{rem:Gauss}.
\item One could complicate the right-hand side to a general KPZ-like one, that is, to $f(u)(\d_x u)^2+g(u)\xi$.
Since our transformation already requires the `full' gKPZ regularity structure, this would not increase the number of trees.
However, the coefficient for each tree would get more complicated.
Carrying out the calculations of Section \ref{sec:cancel} in this generality `by hand' would require quite some additional effort. 
\item 
Both of the two latter generalisations (and even more the case of more irregular noise, where
the ad hoc computations would get humanly infeasible) point to the need 
of a systematised algebraic/combinatorial treatment, as has been developed in the semilinear case in \cite{BHZ, BCCH}.
One main difference to their setup is that our abstract integration operator $\frI$, while relatively easy to handle from the analytic point of view, makes the algebra more involved, see e.g \eqref{eq:u second order}.
\end{claim}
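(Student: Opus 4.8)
The five items are assertions about extensions of Theorem \ref{thm:main}, and the strategy for justifying each is uniform: audit which ingredients of the proof of Theorem \ref{thm:main} are genuinely used, and either verify that they survive or isolate precisely the one that does not, rather than redo the argument from scratch.

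For the first and third items (other Gaussian noises of regularity strictly above $-5/3$, and non-Gaussian noises in the range $(-3/2,-1)$ and in the range between $-8/5$ and $-3/2$): the key point is that the transformation of \cite{GH_Q} recasting \eqref{eq:main} as the fixed-point problem $(U,V)=\frI(\hat\cF(\Xi,U,V))$ is purely algebraic and analytic; it never sees the law of $\xi$, and for any noise of regularity strictly above $-5/3$ the abstract equation lives in the same (gKPZ) regularity structure. This is exactly what keeps the set of symbols contributing to the renormalisation finite and, crucially, unchanged. The plan is then: (i) invoke \cite{CH}, respectively its non-Gaussian counterpart built on the BPHZ character, to obtain the renormalised model and its convergence under the stated covariance assumption; (ii) check that the symmetries of Section \ref{sec:cancel} and the cancellations of Section \ref{sec:proof} transfer verbatim --- they do, since they are statements about the combinatorics of trees and the structure of $\frI$ and use only the linear identities among the renormalisation constants, never their numerical values; (iii) re-tabulate the contributing trees at the relevant threshold ($6$ trees with $3$ noises in $(-3/2,-1)$, an additional $6$ with $4$ noises in the range between $-8/5$ and $-3/2$ as in Remark \ref{rem:Gauss}, the full $17$ with $4$ noises at white-noise regularity). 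No step here requires a new idea.

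For the Dirichlet problem: the plan is to replace the periodic regularity structure by the boundary regularity structure of \cite{GH17}, whose symbols carry extra decorations recording distance to the boundary; the transformation of \cite{GH_Q} and the fixed-point argument transfer, and the identities of Sections \ref{sec:cancel}--\ref{sec:proof} are untouched, since the boundary decorations do not interact with the bulk renormalisation algebra. Here, however, sits the one genuine obstacle, and the reason why ``automatic'' overstates the matter: in the highly nonlinear regime one must construct the extension $\hat\cR$ of the reconstruction operator $\cR$ to distributions of regularity below $-1$, which \cite{GH17} does not furnish at this level of nonlinearity. The assertion to be established is that for Dirichlet conditions (unlike Neumann) this extension can be bypassed, so that everything above regularity $-2$ automatises --- a restricted instance being \cite[Sec~3]{Cyril_Ham}, which a full argument would generalise. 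For Neumann conditions one should instead expect, following \cite{GH17}, a boundary renormalisation to appear, so no analogue of Theorem \ref{thm:main} would hold; this is a stated (non-)expectation rather than a claim to prove.

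For the KPZ-type right-hand side $f(u)(\d_x u)^2+g(u)\xi$: the plan is to observe that the transformation of \cite{GH_Q} already forces the full gKPZ regularity structure, so the list of trees is exactly the one treated here and the analytic estimates and bookkeeping of Section \ref{sec:proof} are unchanged; only the coefficient attached to each tree changes, now a polynomial in $f,g$ and their derivatives in place of $a$ and its derivatives. What grows is the algebraic labour of re-deriving, in Section \ref{sec:cancel}, the linear identities among the more numerous constants --- feasible by hand but the main bottleneck for this item. The closing item is not a proposition but a methodological observation; the only thing to pin down is the stated comparison with \cite{BHZ, BCCH}: that the abstract integration map $\frI$ used here (see e.g. \eqref{eq:u second order}), unlike the plain Schauder integration of the semilinear theory, would have to be incorporated into any systematised algebraic framework, and that this is precisely where the principal new difficulty of such a programme would lie. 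Thus, among the five, the single real obstacle is the reconstruction-operator extension $\hat\cR$ for the Dirichlet problem; the remainder is audit and retabulation.
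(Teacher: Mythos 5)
These items are discussion remarks in the paper's ``Generalisations'' subsection and carry no proof; your audit reproduces the paper's own justifications essentially verbatim (same role for \cite{CH}, the same caveat about extending $\cR$ below regularity $-1$ in the Dirichlet case versus the expected boundary renormalisation for Neumann, the tree counts and Remark \ref{rem:Gauss} for non-Gaussian noise, and the $\frI$-versus-\cite{BHZ, BCCH} comparison via \eqref{eq:u second order}). So the proposal is correct and takes essentially the same approach as the paper.
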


\textbf{Acknowledgements.}
MG was supported by the Austrian Science Fund (FWF) Lise Meitner programme M2250-N32.
Many thanks to the referees for several suggestions on improving the presentation of the paper.

\section{Integration by parts in renormalisation}\label{sec:cancel}
In this section we formulate some identities that
renormalisation constants arising from the renormalisation of regularity structures satisfy.
It is worth noting that 
here we do not use any Gaussianity assumption.
Concerning the main assumption below, Assumption \ref{as:reno} does restrict the generality compared to e.g. \cite{BHZ, CH} quite significantly, but
it allows us to work without the major algebraic complications therein,
and still obtain a number of cancellations that will suffice for the proof of Theorem \ref{thm:main}.

Certain symmetries were obtained
in the very recent work \cite{BGHZ} for multicomponent generalised KPZ equations
driven by space-time white noise.
Our approach here is different and the identities follow from relatively down-to-earth integration by parts-like arguments.
The formulation below furthermore fits well our purposes in Section \ref{sec:proof}, as it keeps track of which edges are and which are not required to have the same integration parameter (denoted by $c$ below) for the identities to hold.

\subsection{Formulation}
Take a regularity structure $\scT=(\cT,A,G)$ as in \cite{H0}.
We assume the notation
\begin{equ}
\cT=\bigoplus_{\alpha\in A} \cT_\alpha\;,\quad
\cT_\alpha=\overline{\span\{\tau_i:i\in I_\alpha\}},
\end{equ}
with some index sets $I_\alpha$, where
$\overline{\,\cdot\,}$ denotes the topological closure.
We denote $\hat\cW=\cup_{\alpha\in A}\{\tau_i:i\in I_\alpha\}$, $\hat\cW_-=\cup_{\alpha\in A\cap(-\infty,0)}\{\tau_i:i\in I_\alpha\}$,
by $\tilde \cW$ and $\tilde \cW_-$ subset of these sets containing $\tau_i$-s without any nonzero power of $X$
and by $\bar\cW$ and $\bar\cW_-$ the further subset of symbols with at least $2$ noise components.
\begin{remark}
Let us briefly comment on the different sets above.
The form of the vector space $\cT$ and its generator $\hat\cW$
is somewhat more involved than in the usual examples, for example the ones in \cite{H0}.
The reason for this generality is that it accommodates infinite dimensional regularity structures, which is required for quasilinear equations.
On the other hand, the renormalisation group in our setting will be sufficiently simple so that it is described by its action on $\bar\cW$.
Finally, $\tilde \cW$ can be viewed as the set possible subtrees of elements of $\bar\cW$.
\end{remark}
We assume that the scaling is parabolic
and that all $\tau\in\hat\cW$ satisfies $|\tau|>-2$.
We furthermore assume that $\scT$ is equipped with an integration operator
$\cI=\cI_c$ of order $2$ that corresponds to a kernel $K=K_c$ that is $2$-smoothing in the sense of \cite[As~5.1]{H0}, is supported in the unit ball, and satisfies
\begin{equ}
(\d_t-c\d_x^2)K_c=\delta_0+f_c,
\end{equ}
where $c>0$ is some constant and $f=f_c$ is a smooth function.
We also assume that $\scT$ is equipped with the abstract differentiation operator $\scD$
and we use the shorthand $\cI'=\scD\cI$.

We assume that elements of $\hat\cW$ are obtained after repeated uses of integration (possibly different ones from $\cI_c$) and multiplication operators
and therefore can be canonically represented by trees. We understand the notion of subtrees in the natural way.
If $\tau$ has $k$ subtrees isomorphic to $\bar \tau$, we denote by $\iota^i_\tau\bar \tau$, $i=1,\ldots,k$, all possible embeddings of $\bar\tau$ in $\tau$.
If $k>0$, we denote it by $\bar\tau\subset\tau$.
If $\sigma$ is a subtree of $\tau$, let $L_\sigma\tau$ be the tree obtained by contracting $\sigma$ to a node.
The action of these contractions on powers of $X$ appearing in the symbols will not play a role in our setting, for details on that we refer to \cite{H_Takagi} and for even more details to \cite{BHZ}.
For any map $g:\bar\cW_-\to\R$ we define $M_{[g]}:\cT\to\cT$ by
the linear and continuous extension of
\begin{equ}\label{eq:reno map}
\tau\mapsto M_{[g]}\tau:=\tau+\sum_{\bar\tau\in\bar\cW_- }g(\bar\tau)\sum_{i}L_{\iota^i_{\tau}\bar\tau}\tau,
\qquad \tau\in\hat\cW.
\end{equ}
Note that even in case $\bar\cW_-$ is infinite (which is the situation of Section \ref{sec:proof}), the sum in \eqref{eq:reno map} has finitely many nonzero contributions.

Fix a set of canonical models $\cM_0$ built from a class of approximate noises of a `target' noise $\xi$ (which may have multiple components). 
We will refer to elements of $\cM_0$ by $\PPi_\eps^\theta$, where
$\eps\in(0,1]$ and $\theta$ runs over some parameter set $\Theta$.
In the context of Theorem \ref{thm:main}, for example, $\Theta$ would be the set of all mollifiers $\rho$ of the form prescribed preceding the theorem.
As usual, we assume the translation invariance of the laws of the approximations, and we also assume that the $\sigma$-algebras
$\sigma\big((\PPi_\eps^\theta\tau_1)(z):\,z\in D_1\big)$ and
$\sigma\big((\PPi_\eps^\theta\tau_2)(z):\,z\in D_2\big)$ are independent if the distance between $D_1,D_2\subset\R^d$ is bigger than $R$, for some $R$ uniformly in $\eps,\theta, \tau_1,\tau_2$.
In a rather large generality \cite{CH} showed that 
one can find maps $\hat M_\eps^\theta:\cT\to\cT$ 
satisfying some natural conditions
 such that for all $\theta\in\Theta$ the models
$\hat\PPi_\eps^\theta:=\PPi_\eps^\theta \hat M_\eps^\theta$ converge in $L_p$ (in the probabilistic sense) to an admissible model $\hat\PPi$ as $\eps\to0$.
In a general situation these maps $\hat M_\eps^\theta$ and
what the `natural conditions' really mean
can be quite complicated,
here we restrict our attention to the following simplified case.
\begin{assumption}\label{as:reno}
The maps
$\hat M_\eps^\theta$ are of the form $M_{[\hat g_\eps^\theta]}$,
with 
\begin{equ}\label{eq:BPHZ g}
\hat g_\eps^\theta(\tau)=-(\E\PPi_\eps^\theta\tau)(0)+
\sum_{\tau\neq\bar\tau\in\bar\cW_- }\hat g_\eps^\theta(\bar\tau)
\sum_{i}(\E\PPi_\eps^\theta L_{\iota^i_{\tau}\bar\tau}\tau)(0).
\end{equ}
Moreover, for all $\tau\in\hat\cW$, $\bar\tau\in \bar\cW_-$, and embedding $\iota_\tau\bar\tau$,
one has 
$(\hat M_\eps^\theta-\id) L_{\iota_\tau\bar\tau}\tau=0$.
\end{assumption}
As for the notion of convergence, which is also somewhat involved, the only fact we will explicitly use
is that
for some $\alpha\in\R$ and all $\tau\in\hat\cW$, $\hat\PPi_\eps^\theta\tau$ converges to $\hat\PPi\tau$ in $L_p(\Omega,\cC^{\alpha}_{\mathop{\mathrm{loc}}})$.
It then follows that since $\E\hat\PPi\tau$ (as well as $\E (h\ast\hat\PPi\tau)\hat\PPi\bar\tau$ for any smooth function $h$) is a translation invariant distribution, it is actually a constant function, and its value depends only on the law of $\xi$.
Viewing \eqref{eq:BPHZ g} as a recursive definition of $\hat g_\eps^\theta$, it guarantees that
$(\E\hat\PPi_\eps^\theta\tau(0))=0$ for all $\tau\in\bar\cW_-$.
Assumption \ref{as:reno} also implies that for any $g$, $\hat\PPi_\eps^\theta M_{[g]}$ converges to $\hat\PPi M_{[g]}$, and the latter is also an admissible model.

\begin{remark}
Assumption \ref{as:reno} is discussed in the setting of \eqref{eq:main} in Section \ref{sec:setup}.
Let us also emphasise that Assumption \ref{as:reno} depends not only on $\scT$ but also on the choice of the approximations $\cM_0$.
It is general enough to cover for example symmetric (but not necessarily Gaussian) approximations of generalised KPZ equations.
It fails however, for example, for non-symmetric approximations of the KPZ equation:
when contracting in $\<BBB>$ the middle subtrees isomorphic to $\<7>$, one again gets $\<7>$, which in the non-symmetric case is not invariant under the renormalisation map.
\end{remark}

Let us extend $g$ as above as $0$ on $\hat\cW\setminus\bar\cW_-$.
With this convention,
denoting the set $\cN\subset\hat\cW$ such that for all $\tau\in\cN$ and all $\theta\in\Theta$ one has $\hat g^\theta_\eps(\tau)=0$,
$\cN$ always contains all symbols of positive degree.

The root of a tree $\tau$ is denoted by $\rho$ (with the understanding that it inherits the indices, so for example the root of a tree called $\tau_1$ will be denoted by $\rho_1$).
In the following $\tau_0$ always denotes a tree with a distinguished node (which may or may not be its root) $\rho_0^\ast$. By $\tau\circ\tau_0$ we denote the tree obtained from gluing $\tau$ and $\tau_0$ together by identifying $\rho$ and $\rho_0^\ast$.
In the special case $\rho_0=\rho_0^\ast$, one has simply $\tau\circ\tau_0=\tau\tau_0$.

Denote by $\bar\tau\subset_{\bullet}\tau$ if $\bar\tau$ can be embedded as a
subtree in $\tau$ that includes its root $\rho$. Given $\tau_0$,
denote by $\bar\tau\subset_{\ast}\tau_0$ if $\bar\tau$ can be embedded as a
subtree in $\tau_0$ that includes its distinguished node $\rho_0^\ast$. 
Summarising the possible inclusions in one example:
\begin{equ}
\tau_0=
\begin{tikzpicture}[scale=0.27,baseline={(0,-0.2)}]
{\draw[kernels2] (-1,1) node[xib] {} -- (0,0) node {} -- (1,1) node[xib] {};
\draw (0,0) node[not] {} -- (1,-1) node[xib] {} -- (0,-2) node[xib] {};
\draw (2.2,-1) node {\scriptsize $\rho_0^*$};}
\end{tikzpicture}
\qquad\text{e.g.: }\<2>\subset\tau_0,  \,\,  \<I2>\subset_\ast\tau_0,  \,\,  \<3>\subset_\bullet\tau_0.
\end{equ}
Introduce the following sets
\begin{equs}
\cA_1&=\{(\tau_1 & & ,\ldots,\tau_n):\,  n\geq 2,\;\tau_i\in\tilde\cW,
\quad
(\cI'\bar\tau_{i(1)}){\textstyle\prod_{k=2}^\ell}\cI\bar\tau_{i(k)}\in\cN
\label{eq:As}
\\
& & &
\quad\quad\forall \ell\in[2, n-1],\; i(1)\neq\cdots \neq i(\ell),\;
\bar\tau_{i(k)}\subset_{\bullet}\tau_{i(k)}
\},
\\
\cA_2&=\{(\tau_0 & & ,\ldots,\tau_n):\,  n\geq 2,\;\tau_i\in\tilde\cW,
(\tau_1,\ldots,\tau_n)\in\cA_1,
\quad
\cI\big((\cI'\bar\tau_{i(1)}){\textstyle\prod_{k=2}^\ell}\cI\bar\tau_{i(k)}\big)\circ\bar\tau_0\in\cN
\\
& & &\quad
\quad\forall \ell\in[1, n-1],\; i(1)\neq\cdots \neq i(\ell),\;
\bar \tau_0\subset_{\ast}\tau_0, \;
\bar\tau_{i(k)}\subset_{\bullet}\tau_{i(k)}\},
\\
\cA_3&=\{(\tau_0 & & ,\ldots,\tau_n):\,  n\geq 2,\;\tau_i\in\tilde\cW,
(\tau_1,\ldots,\tau_n)\in\cA_1,
\quad
\cI'\big((\cI'\bar\tau_{i(1)}){\textstyle\prod_{k=2}^\ell}\cI\bar\tau_{i(k)}\big)\circ\bar\tau_0\in\cN
\\
& & &\quad
\quad\forall \ell\in[1, n-1],\; i(1)\neq\cdots \neq i(\ell),\;
\bar \tau_0\subset_{\ast}\tau_0, \;
\bar\tau_{i(k)}\subset_{\bullet}\tau_{i(k)}
\}.
\end{equs}
Finally, if a real valued sequence $a_\eps$ converges to a finite limit depending only on the law of $\xi$, we denote it by $a_\eps\sim 0$.
Our `integration by parts' formulae then read as follows.
\begin{lemma}\label{lem:Symmetry}
Under Assumption \ref{as:reno}, one has for all $(\tau_1,\ldots,\tau_n)\in\cA_1$
\begin{equ}\label{eq:Symmetry}
\sum_{i=1}^n \hat g_\eps^\theta\big(\tau_i{\textstyle\prod_{k\neq i}\cI\tau_k}\big)
-
c\sum_{i=1}^n\sum_{i\neq j=1}^n
\hat g_\eps^\theta \big((\cI'\tau_i)(\cI'\tau_j)
\textstyle{\prod_{k\neq i,j}\cI\tau_k}\big)
\sim 0,
\end{equ}
for all $(\tau_0,\ldots,\tau_n)\in\cA_2$
\begin{equs}[eq:Symmetry2]
\sum_{i=1}^n\hat g_\eps^\theta\big(\cI\big(\tau_i &{\textstyle\prod_{k\neq i}}\cI\tau_k\big)\circ\tau_0\big)
-\hat g_\eps^\theta\big(({\textstyle\prod_{k}}\cI\tau_k)\circ\tau_0\big)
\\
&- c\sum_{i=1}^n\sum_{i\neq j=1}^n
\hat g_\eps^\theta\big(\cI\big((\cI'\tau_i)(\cI'\tau_j)
\textstyle{\prod_{k\neq i,j}\cI\tau_k}\big)\circ\tau_0\big)
\sim 0,
\end{equs}
and for all $(\tau_0,\ldots,\tau_n)\in\cA_3$
\begin{equs}[eq:Symmetry3]
\sum_{i=1}^n\hat g_\eps^\theta\big(\cI'\big(\tau_i &{\textstyle\prod_{k\neq i}}\cI\tau_k\big)\circ\tau_0\big)
-\sum_{i=1}^n\hat g_\eps^\theta\big(\big((\cI'\tau_i){\textstyle\prod_{k\neq i}}\cI\tau_k\big)\circ\tau_0\big)
\\
&- c\sum_{i=1}^n\sum_{i\neq j=1}^n
\hat g_\eps^\theta\big(\cI'\big((\cI'\tau_i)(\cI'\tau_j)
\textstyle{\prod_{k\neq i,j}\cI\tau_k}\big)\circ\tau_0\big)
\sim 0.
\end{equs}
\end{lemma}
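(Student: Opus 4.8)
The plan is to derive all three identities from a single mechanism: the fact that $(\E\hat\PPi_\eps^\theta\sigma)(0)\sim 0$ for every $\sigma\in\bar\cW_-$ (a direct consequence of the recursive definition \eqref{eq:BPHZ g}), combined with an integration-by-parts identity for the kernel $K_c$. First I would unwind the renormalised model on the relevant trees. Fix $(\tau_1,\dots,\tau_n)\in\cA_1$ and let $\sigma=\prod_{k=1}^n\cI\tau_k$, which by the defining constraint of $\cA_1$ (taking $\ell=n$ is the full product, but more importantly all partial contractions land in $\cN$) is a symbol to which we may apply $(\E\hat\PPi_\eps^\theta\,\cdot\,)(0)\sim 0$. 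The point is that $\hat\PPi_\eps^\theta = \PPi_\eps^\theta\hat M_\eps^\theta$ and $\hat M_\eps^\theta = M_{[\hat g_\eps^\theta]}$, so expanding $M_{[\hat g_\eps^\theta]}\sigma$ via \eqref{eq:reno map} produces $\sigma$ itself plus contractions $L_{\iota_\sigma\bar\tau}\sigma$ weighted by $\hat g_\eps^\theta(\bar\tau)$. The crucial structural observation is which subtrees $\bar\tau\in\bar\cW_-$ can sit inside $\sigma$: since $\sigma$ is a product of $\cI\tau_k$ glued at a common root, a negative-degree subtree through the root is exactly of the form $\tau_i\prod_{k\neq i}\cI\tau_k$ for some $i$ (these give the first sum in \eqref{eq:Symmetry}), while the subtrees not through the root are contained in individual branches and, by the $\cA_1$ constraint, generate symbols in $\cN$, hence carry coefficient zero and contribute nothing after using the second clause of Assumption \ref{as:reno} ($(\hat M_\eps^\theta-\id)L_{\iota\bar\tau}\tau=0$, which lets us drop iterated contractions).

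Next comes the analytic input. Having reduced $(\E\hat\PPi_\eps^\theta\sigma)(0)$ to $(\E\PPi_\eps^\theta\sigma)(0)$ plus $\sum_i\hat g_\eps^\theta(\tau_i\prod_{k\neq i}\cI\tau_k)\,(\E\PPi_\eps^\theta(\text{contracted tree}))(0)$, and knowing the left side is $\sim 0$, I would apply the \emph{same} expansion to a second symbol chosen so that its own $\sim 0$ relation produces the remaining terms. Here is where integration by parts enters: write $\sigma = \cI\tau_1\cdot\prod_{k\geq 2}\cI\tau_k$ and use that on the canonical model $\PPi_\eps^\theta$ one has, by the defining PDE $(\d_t - c\d_x^2)K_c = \delta_0 + f_c$, an identity relating $\d_x(\PPi_\eps^\theta\cI\tau_1)$ and a $c$-weighted expression; integrating the product against the test against the heat kernel and moving one $\d_x$ across, the cross terms $c\,(\cI'\tau_i)(\cI'\tau_j)\prod_{k\neq i,j}\cI\tau_k$ in \eqref{eq:Symmetry} appear precisely as the ``other half'' of the Leibniz expansion of $\d_x\d_x$ on the product. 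Concretely I expect the clean statement to be: for the tree $\sigma'$ obtained by replacing one $\cI$ by its ``doubled'' version one gets $(\E\PPi_\eps^\theta\sigma)(0) = c\sum_{i\neq j}(\E\PPi_\eps^\theta(\cI'\tau_i)(\cI'\tau_j)\prod\cI\tau_k)(0) + (\text{lower order, }\sim 0)$, the lower-order pieces coming from the smooth remainder $f_c$ and from boundary-type terms that are themselves translation-invariant limits. Combining the two $\sim 0$ relations yields \eqref{eq:Symmetry}.

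For \eqref{eq:Symmetry2} and \eqref{eq:Symmetry3} the argument is the same but run inside the larger tree $\cI(\sigma)\circ\tau_0$ (resp. $\cI'(\sigma)\circ\tau_0$): one applies $(\E\hat\PPi_\eps^\theta\,\cdot\,)(0)\sim 0$ to $\cI((\prod_k\cI\tau_k))\circ\tau_0$, whose negative-degree root-containing subtrees are now either $\cI(\tau_i\prod_{k\neq i}\cI\tau_k)\circ\tau_0$ (first sum), the subtree $(\prod_k\cI\tau_k)\circ\tau_0$ with the outermost $\cI$ stripped (the isolated extra term $-\hat g_\eps^\theta((\prod_k\cI\tau_k)\circ\tau_0)$), or a subtree $\bar\tau_0\subset_\ast\tau_0$ through the distinguished node — and the $\cA_2$ constraint is exactly what forces every mixed contraction involving $\tau_0$ to land in $\cN$, so again only the listed terms survive; the $c$-term is produced by the same Leibniz/integration-by-parts step applied to one of the inner $\cI\tau_i$. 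The case $\cA_3$ is identical with $\cI$ replaced by $\cI'=\scD\cI$ throughout, the only change being that stripping the outer $\cI'$ from $\tau_i\prod\cI\tau_k$ leaves $(\cI'\tau_i)\prod_{k\neq i}\cI\tau_k$, explaining the second sum in \eqref{eq:Symmetry3}.

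\textbf{Main obstacle.} The bookkeeping step — verifying that the $\cA_1/\cA_2/\cA_3$ membership conditions are \emph{exactly} strong enough to kill every unwanted contraction term (all the subtrees not through the root, and all mixed subtrees touching $\tau_0$) while being weak enough that the stated identities are nontrivial — is the delicate part; it requires a careful case analysis of how a negative-degree subtree can sit inside a product-of-$\cI$'s tree, using the degree assumption $|\tau|>-2$ to limit how many $\cI$-edges a subtree can cut. The genuinely analytic obstacle is making the integration-by-parts step rigorous at the level of the (non-smooth in the limit) models: one works at fixed $\eps>0$ where everything is smooth, performs the spatial integration by parts on $\E\PPi_\eps^\theta$, and must argue that the error terms (from $f_c$ and from the fact that $K_c$ is only supported in the unit ball, not globally) converge to finite $\xi$-law-dependent constants, i.e. are $\sim 0$; this uses the convergence $\hat\PPi_\eps^\theta\tau\to\hat\PPi\tau$ in $L_p(\cC^\alpha_{\mathrm{loc}})$ and translation invariance of $\E\hat\PPi\tau$ stated in the text.
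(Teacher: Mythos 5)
There is a genuine gap in the combinatorial step, and it is fatal as stated. You set $\sigma=\prod_{k=1}^n\cI\tau_k$ and claim that the negative-degree root-containing subtrees of $\sigma$ are exactly $\tau_i\prod_{k\neq i}\cI\tau_k$, so that the contraction expansion of $(\E\hat\PPi_\eps^\theta\sigma)(0)$ produces $\sum_i\hat g_\eps^\theta(\tau_i\prod_{k\neq i}\cI\tau_k)$. But $\tau_i\prod_{k\neq i}\cI\tau_k$ is \emph{not} a subtree of $\prod_k\cI\tau_k$: in $\sigma$ the tree $\tau_i$ hangs off the root via an $\cI$-edge, whereas in $\tau_i\prod_{k\neq i}\cI\tau_k$ the root of $\tau_i$ \emph{is} the root of the product. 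A root-containing subtree of $\sigma$ necessarily has the form $\prod_{k\in S}\cI\bar\tau_k$ with $S\subset\{1,\dots,n\}$ and $\bar\tau_k\subset_\bullet\tau_k$, which for $|S|\ge 1$ always carries at least one full $\cI$-edge at the root and so is a different collection of trees altogether. Consequently, the renormalisation constants $\hat g_\eps^\theta(\tau_i\prod_{k\neq i}\cI\tau_k)$ simply do not appear as coefficients in the contraction expansion of $\E\hat\PPi_\eps^\theta\sigma$, and your first reduction step does not produce the left-hand side of \eqref{eq:Symmetry}. (A secondary issue: $\sigma$ has positive degree since each $\cI\tau_k$ does, so its being $\sim 0$ relies on model convergence rather than the BPHZ zero-mean condition, but that by itself is survivable.)

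The paper's actual proof works at the level of the canonical expectations of the trees that actually appear in the statement, $\sigma^i=\tau_i\prod_{k\neq i}\cI\tau_k$ and $\sigma_{ij}=(\cI'\tau_i)(\cI'\tau_j)\prod_{k\neq i,j}\cI\tau_k$. It writes these expectations as integrals of the translation-invariant correlation function $Q_\eps(z_1,\dots,z_n)=\E\big(\prod_k\PPi_\eps\tau_k(z_k)\big)$ against products of kernels, and then uses the Leibniz-rule identity
\begin{equs}
{\textstyle\sum_i}(\delta_0+f)(\bar z-z_i){\textstyle\prod_{k\neq i}}K(\bar z-z_k)
-c{\textstyle\sum_{i\neq j}}K'(\bar z-z_i)K'(\bar z-z_j){\textstyle\prod_{k\neq i,j}}K(\bar z-z_k)
\end{equs}
being a total $(\d_t-c\Delta)^{[\bar z]}$-derivative of $\prod_k K(\bar z-z_k)$; translation invariance of $Q_\eps$ then kills the total-derivative part, yielding a clean identity for $\sum_i\E\PPi_\eps\sigma^i - c\sum_{i\neq j}\E\PPi_\eps\sigma_{ij}$ up to an $f$-remainder that converges by model convergence. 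The passage from $\E\PPi_\eps$ to $\hat g_\eps^\theta$ is then done by pairing up the BPHZ subtree contractions across $\sigma^i$, $\sigma^j$, $\sigma_{ij}$ (using that they are in natural bijection when the contracted subtree sits in one $\tau_\ell$ or spreads across a common index set $I$), invoking the $\cA_1$ conditions to discharge the mismatched subtrees of $\sigma_{ij}$, and closing by induction. Your sketch touches the right analytic ingredients (the PDE $(\d_t-c\d_x^2)K_c=\delta_0+f_c$, translation invariance, the role of $f_c$ and of $\cA_1$), but because the integration by parts must be performed on the correlation function $Q_\eps$ integrated against the kernel products defining $\sigma^i$ and $\sigma_{ij}$ directly, rather than extracted from the contraction expansion of an auxiliary tree $\prod\cI\tau_k$, the argument as you have structured it cannot be completed.
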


The following corollary is immediate.
\begin{corollary}\label{cor:main}
Under Assumption \ref{as:reno} there exist maps $g_\eps^\theta:\bar\cW_-\to\R$  and such that
\begin{claim}
\item The identities \eqref{eq:Symmetry}-\eqref{eq:Symmetry2}-\eqref{eq:Symmetry3} are satisfied with equality;
\item The sequence of models $\PPi_\eps^\theta M_{[g_\eps^\theta]}$ converge
and the limit is of the form $\hat\PPi M_{[g]}$ for some $g$ depending only on the law of $\xi$;
\item If for $\tau_1,\ldots,\tau_k\in\bar\cW_-$ the system of equations
\begin{equ}
g_\eps^\theta(\tau_i)=0\quad\quad i=1,\ldots,k
\end{equ}
is linearly independent of \eqref{eq:Symmetry}-\eqref{eq:Symmetry2}-\eqref{eq:Symmetry3}, then $g_\eps^\theta$ can be chosen to agree with $\hat g_\eps^\theta$ on $\tau_1,\ldots,\tau_k$.
\end{claim}
\end{corollary}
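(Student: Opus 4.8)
\textbf{Proof plan for Corollary \ref{cor:main}.}
The plan is to view the whole situation as a problem in finite-dimensional affine-linear algebra, parametrised by $\eps$ and $\theta$, and to exhibit $g_\eps^\theta$ explicitly as a particular solution of a linear system whose solution set is an affine subspace containing $\hat g_\eps^\theta$ up to a term that vanishes in the limit. First I would fix, for each $\eps$ and $\theta$, the finite set $\cW^{(N)}\subset\bar\cW_-$ of symbols of degree at least $-N$ (with $N$ large enough to contain all symbols relevant to Section \ref{sec:proof}), and observe that because the scaling is parabolic and all $\tau\in\hat\cW$ satisfy $|\tau|>-2$, only finitely many symbols enter any of the identities \eqref{eq:Symmetry}--\eqref{eq:Symmetry2}--\eqref{eq:Symmetry3}; thus each of those identities, read as a statement about the real numbers $\{\hat g_\eps^\theta(\tau)\}_{\tau\in\cW^{(N)}}$, is a linear equation with constant (in fact integer, coming from $c$ and combinatorial factors) coefficients, modulo an error term that is $\sim 0$. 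Collect all these equations into one matrix $\mathsf{A}\, x \sim 0$, where $x$ ranges over $\R^{\cW^{(N)}}$. By Lemma \ref{lem:Symmetry}, the vector $x=(\hat g_\eps^\theta(\tau))_\tau$ satisfies $\mathsf{A}\,x=b_\eps^\theta$ with $b_\eps^\theta\to 0$; hence there is a true solution $x_0$ of $\mathsf{A}\,x_0=0$ with $x_0=(\hat g_\eps^\theta(\tau))_\tau+O(\|b_\eps^\theta\|)$, obtained e.g. by orthogonal projection of $(\hat g_\eps^\theta(\tau))_\tau$ onto $\ker\mathsf{A}$. Define $g_\eps^\theta$ to be the map with these components on $\cW^{(N)}$ and $0$ elsewhere; this gives property (i) by construction.

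For property (ii), the point is that $g_\eps^\theta=\hat g_\eps^\theta+r_\eps^\theta$ where $r_\eps^\theta\to 0$ as $\eps\to 0$, uniformly in $\theta$ (the projection is a fixed linear map independent of $\eps,\theta$, and $b_\eps^\theta\to0$ in the relevant sense by the last sentence of Lemma \ref{lem:Symmetry}, where $\sim 0$ means convergence to a limit depending only on the law of $\xi$). Then $M_{[g_\eps^\theta]}=M_{[\hat g_\eps^\theta]}\circ M_{[r_\eps^\theta]}'$ up to lower-order corrections coming from \eqref{eq:reno map}; more cleanly, since $r_\eps^\theta\to g^\ast$ for some limiting map $g^\ast$ depending only on the law of $\xi$ (each component of $r_\eps^\theta$ converges, being the difference of two convergent quantities), we may write $g_\eps^\theta\to\hat g+g^\ast=:g$, where $\hat g$ is the limiting renormalisation map implicit in Assumption \ref{as:reno}. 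The paragraph following Assumption \ref{as:reno} already records that for any fixed map $h$, $\hat\PPi_\eps^\theta M_{[h]}$ converges to $\hat\PPi M_{[h]}$ and the latter is admissible; applying this together with the convergence $g_\eps^\theta\to g$ (and the continuity of $M_{[\cdot]}$ in its parameter, which holds because for each $\tau$ the sum in \eqref{eq:reno map} is finite and depends continuously on finitely many values of the parameter) yields that $\PPi_\eps^\theta M_{[g_\eps^\theta]}$ converges to $\hat\PPi M_{[g]}$, with $g$ depending only on the law of $\xi$. This is property (ii).

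For property (iii), suppose the equations $g_\eps^\theta(\tau_i)=0$, $i=1,\dots,k$, are linearly independent of the rows of $\mathsf{A}$. Then I can enlarge $\mathsf{A}$ to $\mathsf{A}'$ by appending these $k$ rows, and the inhomogeneous system $\mathsf{A}'x=(b_\eps^\theta, \hat g_\eps^\theta(\tau_1),\dots,\hat g_\eps^\theta(\tau_k))$ is solved by $x=(\hat g_\eps^\theta(\tau))_\tau$; by linear independence the kernel of $\mathsf{A}$ still surjects onto the coordinates complementary enough that one can again project $(\hat g_\eps^\theta(\tau))_\tau$ onto $\{x:\mathsf{A}x=0,\ x(\tau_i)=\hat g_\eps^\theta(\tau_i)\ \forall i\}$ — a nonempty affine subspace — and take $g_\eps^\theta$ to be that projection. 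This choice simultaneously satisfies (i) (it lies in $\ker\mathsf{A}$), agrees with $\hat g_\eps^\theta$ on $\tau_1,\dots,\tau_k$, and still differs from $\hat g_\eps^\theta$ by a term $\to 0$, so (ii) is preserved by the same argument as above. The only genuine subtlety — and the step I expect to require the most care — is the bookkeeping that the finite truncation $\cW^{(N)}$ is harmless: one must check that enlarging $N$ does not change the solution on the symbols of interest, i.e. that the identities \eqref{eq:Symmetry}--\eqref{eq:Symmetry3} and the chosen projections are compatible across truncations. This follows because any single identity involves only symbols of bounded degree (each $\cI$ or $\cI'$ strictly raises degree by a fixed positive amount under the parabolic scaling, and $|\tau|>-2$ bounds the number of noise factors), so once $N$ exceeds that bound the system stabilises; I would state this as a short lemma and then the rest is the routine linear algebra sketched above.
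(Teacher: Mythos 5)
Your linear-algebra skeleton (truncate to the finitely many relevant symbols, view \eqref{eq:Symmetry}--\eqref{eq:Symmetry3} as a linear system in the values of $\hat g_\eps^\theta$, and correct $\hat g_\eps^\theta$ by a solution of the homogeneous system obtained as a fixed linear image of the defect vector) is indeed the intended "immediate" argument, including the handling of item (iii). However, you misread the notation $\sim 0$: by the paper's convention it means convergence to a finite limit \emph{depending only on the law of $\xi$}, not convergence to $0$. Hence the defect $b_\eps^\theta$ and the correction $r_\eps^\theta=g_\eps^\theta-\hat g_\eps^\theta$ do not tend to zero; they merely converge to law-dependent limits. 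This is repairable (your projection makes $r_\eps^\theta$ a fixed linear function of $b_\eps^\theta$, hence convergent), but your stated justification that "each component of $r_\eps^\theta$ converges, being the difference of two convergent quantities" is wrong: neither $g_\eps^\theta(\tau)$ nor $\hat g_\eps^\theta(\tau)$ converges individually -- these are the divergent renormalisation constants, and only the specific linear combinations in Lemma \ref{lem:Symmetry} converge.

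The genuine gap is in your argument for item (ii). There is no "limiting renormalisation map $\hat g$ implicit in Assumption \ref{as:reno}" -- $\hat g_\eps^\theta$ diverges -- and the canonical model $\PPi_\eps^\theta$ itself does not converge, so convergence of the parameter $g_\eps^\theta$ (which in any case fails) together with continuity of $g\mapsto M_{[g]}$ cannot yield convergence of $\PPi_\eps^\theta M_{[g_\eps^\theta]}$; likewise the cited fact that $\hat\PPi_\eps^\theta M_{[h]}\to\hat\PPi M_{[h]}$ concerns the \emph{renormalised} models and does not apply to $\PPi_\eps^\theta M_{[g_\eps^\theta]}$ directly. The missing step, which you waved away as "lower-order corrections", is the factorisation $M_{[\hat g_\eps^\theta+r]}=M_{[\hat g_\eps^\theta]}M_{[r]}$, valid precisely because the second half of Assumption \ref{as:reno} states $(\hat M_\eps^\theta-\id)L_{\iota_\tau\bar\tau}\tau=0$, i.e.\ the contracted trees are left fixed by $M_{[\hat g_\eps^\theta]}$. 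This gives $\PPi_\eps^\theta M_{[g_\eps^\theta]}=\hat\PPi_\eps^\theta M_{[r_\eps^\theta]}$, and then the remark following Assumption \ref{as:reno}, combined with $r_\eps^\theta\to g$ (a limit depending only on the law of $\xi$) and the fact that for each symbol only finitely many values of $r_\eps^\theta$ enter \eqref{eq:reno map}, yields the limit $\hat\PPi M_{[g]}$. Without this identity your proof of (ii) does not go through.
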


\begin{remark}
One can pictorially represent the above as follows.
Focusing on the $n=2$ case, the identities \eqref{eq:Symmetry} give relationships between renormalisation constants of trees obtained from the `scheme'
\begin{tikzpicture}[scale=0.27,baseline={(0,0)}]
\draw[densely dotted] (-1,1) node[sqb] {} -- (0,0) node {} -- (1,1) node[sqb] {};
\end{tikzpicture},
where the different edges 
\begin{tikzpicture}[scale=0.27]
\draw[densely dotted] (0,0) node {} -- (1.5,0) node {};
\end{tikzpicture}
are substituted with different combinations of $\cI$, $\cI'$, or contracting the edge, and $\cA_1$ gives conditions on what trees can be substituted in the placeholders
\begin{tikzpicture}[scale=0.27]
\draw[densely dotted] (0,0) node[sqb] {};
\end{tikzpicture} .
Similarly, for \eqref{eq:Symmetry2} and \eqref{eq:Symmetry3} one substitutes in the `scheme' 
\begin{tikzpicture}[scale=0.27,baseline={(0,-0.1)}]
\draw[densely dotted] (-1,1) node[sqb] {} -- (0,0) node {} -- (1,1) node[sqb] {} -- (0,0) node {} -- (1,-1) node[sqb] {};
\end{tikzpicture} .
\end{remark}

\begin{example}\label{example}
Let us list a couple of examples in the case $c=1$.
We use the graphical convention (as in, for example, \cite{H_String}) of $\<0>$ denoting the noise, $\<black>$ denoting $\cI$ and $\<red>$ denoting $\cI'$.
Then, assuming $|\,\<0>\,|>-3/2-1/100$ and $\<7>\,,\,\<4>\,,\,\<5>\in\cN$,
one has:
\begin{equs}
g^\theta_\eps(\<AMM>)&=3g^\theta_\eps(\<BMM>)
&\quad & \eqref{eq:Symmetry},\; n=4, \tau_1=\tau_2=\tau_3=\tau_4=\<0>\;,\\
&=2g^\theta_\eps(\<AMB>)-g^\theta_\eps(\<AMA>)
&\quad & \eqref{eq:Symmetry},\; n=2, \tau_1=\<0>\;,\tau_2=\<tmp>\;,\\
&=4g^\theta_\eps(\<ABM>)+2g^\theta_\eps(\<S4>)-2g^\theta_\eps(\<AAM>)
&\quad & \eqref{eq:Symmetry},\; n=3, \tau_1=\tau_2=\<0>\;,\tau_3=\<3>\;,
\\
6g_\eps^\theta(\<BMA>)&=3g_\eps^\theta(\<AMA>)-g_\eps^\theta(\<AMM>)
&\quad & \eqref{eq:Symmetry2},\; n=3, \tau_0=\tau_1=\tau_2=\tau_3=\<0>\;,
\\
2g_\eps^\theta(\<BBB>)
&=g_\eps^\theta(\<BAB>)+g_\eps^\theta(\<BMB>)-g_\eps^\theta(\<BBM>)-g_\eps^\theta(\<S5>)
&\quad & \eqref{eq:Symmetry3},\; n=2, \tau_0=\<r1>\;, \tau_1=\<0>\;, \tau_2=\<3>\;,
\\
&=2g_\eps^\theta(\<ABB>)-2g_\eps^\theta(\<BMB>)
&\quad & \eqref{eq:Symmetry3},\; n=2, \tau_0=\<7>\;,\tau_1=\tau_2=\<0>\;.
\end{equs}
In the last example we chose $\rho_0^\ast\neq\rho_0$ to be the leftmost node in $\<7>$.

Let us check that the given tuples indeed lie in the appropriate $\cA_i$ sets, in the first and last example above. To verify $(\<0>\,,\<0>\,,\<0>\,,\<0>)\in\cA_1$, since there are no nontrivial subtrees of $\<0>$, the only choice in \eqref{eq:As} is $\ell$: for $\ell=2$ we have $\<4>\in\cN$ by assumption and for $\ell=3$ we have $\<9>\in\cN$ since $|\,\<9>\,|>0$. To verify $(\<7>\,,\<0>\,,\<0>)\in\cA_3$, first we note that $(\<0>\,,\<0>)\in\cA_1$ is automatic.
Since there are no nontrivial subtrees of $\tau_1=\tau_2=\<0>$ and the only choice for $\ell$ is $\ell=1$, the other condition in $\cA_3$ boils down to show $\tau=\cI'(\cI'\<0>)\circ\bar\tau_0\in\cN$ for $\bar\tau_0\subset_\ast\tau_0$. For the choice $\bar\tau_0=\<r1>$ this holds by assumption, while for all other choices $|\tau|>0$.
\end{example}

\subsection{Proof of Lemma \ref{lem:Symmetry}}
We will sometimes use $\d^{[z]}_\alpha$ to emphasise that a differential operator $\d_\alpha$ acts on the $z$ variable.
We say that a function $Q$ in $n$ $d$-dimensional variables is translation-invariant if
$T_{\bar z}Q(z_1,\ldots,z_n):=Q(z_1+\bar z,\ldots,z_n+\bar z)=Q(z_1,\ldots,z_n)$ for all $\bar z\in\R^d$.
Notice that if $Q$ is a translation-invariant smooth function and $\eta$ is a compactly supported distribution, then for any nonzero multiindex $\alpha$
\begin{equ}\label{eq:translation vanish}
\big(\d_\alpha^{[\bar z]}(T_{\bar z}\eta)\big)(Q)
=0.
\end{equ}
\begin{proof}[Proof of Lemma \ref{lem:Symmetry}]
To ease the notation, we drop the $\theta$ index, but it will be clear that
the conclusion is approximation-independent.
We start with the proof of \eqref{eq:Symmetry}.
Denote
\begin{equ}\label{eq:something}
\sigma={\textstyle\prod_{k}\cI\tau_k},\quad
\sigma^i=\tau_i{\textstyle\prod_{k\neq i}\cI\tau_k},\quad
\mathring\sigma^i={\textstyle\prod_{k\neq i}\cI\tau_k},\quad
\sigma_{ij}=(\cI'\tau_i)(\cI'\tau_j)\textstyle{\prod_{k\neq i,j}\cI\tau_k},
\end{equ}
as well as 
\begin{equ}
Q_\eps(z_1,\ldots,z_n)=\E\big(\PPi_\eps\tau_1(z_1)\cdots\PPi_\eps\tau_n(z_n)\big).
\end{equ}
Clearly $Q_\eps$ is a translation invariant smooth function which is $0$
on $\{\max_{i}\min_{j\neq i}|z_i-z_j|\geq R\}$.
Let us take a smooth compactly supported function $\chi^R$ that is $1$ on the ball of radius $R+2$ around the origin and denote $f^R=f\chi^R$.
From the Leibniz rule
\begin{equs}\label{eq:Leibniz}
{\textstyle\sum_i}
&(\delta_0+f)(\bar z-z_i){\textstyle\prod_{k\neq i}}K(\bar z-z_k)
-c
{\textstyle\sum_{i\neq j}}K'(\bar z-z_i)K'(\bar z-z_j)
{\textstyle\prod_{k\neq i,j}}K(\bar z-z_k)
\\
\quad &=
(\d_t-c\Delta)^{[\bar z]}\Big(K(z_0-\bar z){\textstyle\prod_{k}}K(\bar z-z_k)\Big)
\end{equs}
we get
\begin{equs}[eq:00]
{\textstyle\sum_i} & (\E\PPi_\eps\sigma^i)(\bar z)  -c{\textstyle\sum_{i\neq j}}(\E\PPi_\eps\sigma_{ij})(\bar z)
\\
&=\int  Q_\eps(z_1,\ldots,z_n)
(\d_t-c\Delta)^{[\bar z]}
\Big({\textstyle\prod_{k}}K(\bar z-z_k)\Big)\,dz_1\cdots dz_n
\\
&\quad - 
{\textstyle\sum_i}\int Q_\eps(z_1,\ldots,z_n)
f(\bar z-z_i)
{\textstyle\prod_{k\neq i}}K(\bar z-z_k)
\,dz_1\cdots dz_n
\\
&=-{\textstyle\sum_i}\big(\E (f^R\ast\PPi_\eps\tau_i)(\PPi_\eps\mathring\sigma^i\big))(\bar z),
\end{equs}
where we used that the integral in the second line vanishes due to \eqref{eq:translation vanish}.

There are two essentially different scenarios in which one has
$\bar\tau\subset\sigma^i.$ 
First, when $\iota_{\sigma^i}\bar\tau$ is obtained from an embedding $\iota_{\tau_\ell}\bar\tau$ for some
$\ell$.
This has obvious corresponding embeddings $\iota_{\sigma^j}\bar\tau$, $\iota_{\sigma_{ij}}\bar\tau$,
and $\iota_{\mathring\sigma^j}\bar\tau$, and moreover
the results of contracting these subtrees are exactly of the form \eqref{eq:something}, with $\tau_\ell$ replaced by $L_{\iota_{\tau_\ell}\bar\tau}\tau_\ell$.
In this case therefore one has an identity analogous to \eqref{eq:00}.

If $\bar\tau\subset\sigma^i$ is not of this form, then it can be written as
$\bar\tau=\tilde\sigma^i:=\tilde\tau_i{\textstyle\prod_{k}\cI\tilde\tau_{\ell(k)}}$
with some indices $\ell(k)$ distinct from each other and from $i$,
and with $\tilde\tau_j\subset_{\bullet}\tau_j$.
Denote the set of indices $\ell(k)$ along with $i$ by $I$.
One can pair these subtrees with those of $\sigma^j$ and $\sigma_{jm}$ whenever $j,m\in I$: simply define $\tilde\sigma^j\subset\sigma^j$ and
$\tilde\sigma_{ij}\subset\sigma_{ij}$ as in \eqref{eq:something}, replacing each $\tau_k$ with $\tilde\tau_k$.
One then has $L_{\tilde\sigma^j}\sigma^j=L_{\tilde\sigma^{j'}}\sigma^{j'}
=L_{\tilde\sigma_{mm'}}\sigma_{mm'}=:\hat\tau$ for all $j,j',m,m'\in I$.
Denote the set of all the possible $\hat\tau$-s obtained this way (with multiplicities) by $A$.
By Assumption \ref{as:reno}, for all $\hat\tau\in A$ one has $\PPi_\eps\hat\tau=\hat\PPi_\eps\hat\tau$.

The definition of $\cA_1$ guarantees that these two cases exhaust all the contributions to the renormalisation of
$\sigma_{ij}$ as well: the only subtrees not covered so far are of the form $\bar \tau=(\cI'\tilde\tau_i){\textstyle\prod_{k}\cI\tilde\tau_{\ell(k)}}$ with some indices $\ell(k)$ distinct from each other and from $i$ and $j$.
By definition, any $\bar\tau$ of this form belongs to $\cN$, so does not contribute to the renormalisation.

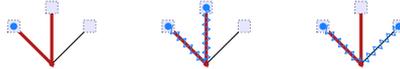
\begin{figure}[h]
\centering
\begin{tikzpicture}[scale=0.5]
\draw[kernels2] (-1,1) node[sqb] {} -- (0,0) node[not] {} -- (0,1.5) node[sqb] {};
\draw (0,0) node[not] {} -- (1,1) node[sqb] {};
\draw[bkernel] (-1,1) node[bdot] {};
\end{tikzpicture}
\qquad
\begin{tikzpicture}[scale=0.5]
\draw[kernels2] (-1,1) node[sqb] {} -- (0,0) node[not] {} -- (0,1.5) node[sqb] {};
\draw (0,0) node[not] {} -- (1,1) node[sqb] {};
\draw[bkernel] (-1,1) node[bdot] {} -- (0,0) node[not] {} -- (0,1.5) node[bdot] {};
\end{tikzpicture}
\qquad
\begin{tikzpicture}[scale=0.5]
\draw[kernels2] (-1,1) node[sqb] {} -- (0,0) node[not] {} -- (0,1.5) node[sqb] {};
\draw (0,0) node[not] {} -- (1,1) node[sqb] {};
\draw[bkernel] (-1,1) node[bdot] {} -- (0,0) node[not] {} -- (1,1) node[bdot] {};
\end{tikzpicture}
\caption{\footnotesize Different possible positions of subtrees, drawn by blue, for $\sigma_{12}$, $n=3$. The third kind belongs to $\cN$ by the definition of $\cA_1$.}
\end{figure}

Therefore we have
\begin{equs}
{\textstyle\sum_i} & (\E\hat\PPi_\eps\sigma^i)(\bar z)  -c{\textstyle\sum_{i\neq j}}(\E\hat\PPi_\eps\sigma_{ij})(\bar z)+{\textstyle\sum_i}\big(\E (f^R\ast\hat\PPi_\eps\tau_i)(\hat\PPi_\eps\mathring\sigma^i\big))(\bar z)
\\
&=\sum_{\hat\tau\in A} (\E\hat\PPi_\eps\hat\tau)(\bar z)\Big({\textstyle\sum_i} \hat g_\eps(\tilde\sigma^i)  -c{\textstyle\sum_{i\neq j}}\hat g_\eps(\tilde\sigma_{ij})\Big).
\end{equs}
After testing with any function integrating to $1$ and recalling that $\E\hat\PPi\tau$ is a constant distribution, we pass to the $\eps\to 0 $ limit and rearrange the above as
\begin{equs}
\lim_{\eps\to 0} & {\textstyle\sum_i} \hat g_\eps(\sigma^i)  -c{\textstyle\sum_{i\neq j}}\hat g_\eps(\sigma_{ij})=
\sum_{\bone\neq\hat\tau\in A} (\E\hat\PPi\hat\tau)(0)\lim_{\eps\to 0}\Big({\textstyle\sum_i} \hat g_\eps(\tilde\sigma^i)  -c{\textstyle\sum_{i\neq j}}\hat g_\eps(\tilde\sigma_{ij})\Big)
\\
&+{\textstyle\sum_i}  (\E\hat\PPi\sigma^i)(0)  -c{\textstyle\sum_{i\neq j}}(\E\hat\PPi\sigma_{ij})(0)+{\textstyle\sum_i}\big(\E (f^R\ast\hat\PPi\tau_i)(\hat\PPi\mathring\sigma^i\big))(0).
\end{equs}
Therefore by a simple induction argument we get the claim.

The proof of the other two claims goes along very similar lines.
There are two slight differences, the first one of which is the definition of $Q_\eps$: we now set
\begin{equ}
Q_\eps(z_0,z_1,\ldots,z_n)=\E\big(\PPi_\eps\tau_0^\ast(z_0)\PPi_\eps\tau_1(z_1)\cdots\PPi_\eps\tau_n(z_n)\big),
\end{equ}
where $\tau_0^\ast$ is the tree obtained by viewing $\tau_0$ as a tree with root $\rho_0^\ast$.
The other difference is the application of the Leibniz rule:
 we simply replace the identity \eqref{eq:Leibniz} with,
in the case of \eqref{eq:Symmetry2}
\begin{equs}[eq:01]
K(z_0-\bar z)
&{\textstyle\sum_i}
(\delta_0+f)(\bar z-z_i){\textstyle\prod_{k\neq i}}K(\bar z-z_k)
-(\delta_0+f)(\bar z-z_0){\textstyle\prod_{k}}K(\bar z-z_k)
\\
&\quad-cK(z_0-\bar z)
{\textstyle\sum_{i\neq j}}K'(\bar z-z_i)K'(\bar z-z_j)
{\textstyle\prod_{k\neq i,j}}K(\bar z-z_k)
\\
&=\d_t^{[\bar z]}\Big(K(z_0-\bar z){\textstyle\prod_{k}}K(\bar z-z_k)\Big)
\\
&\quad
-c\d_x^{[\bar z]}\Big(
K(z_0-\bar z)
{\textstyle\sum_i}
K'(\bar z-z_i){\textstyle\prod_{k\neq i}}K(\bar z-z_k)
+K'(z_0-\bar z){\textstyle\prod_{k}}K(\bar z-z_k)
\Big),
\end{equs}
while in the case of \eqref{eq:Symmetry3}
we make use of
\begin{equs}[eq:02]
K'(z_0-\bar z)&{\textstyle\sum_i}
(\delta_0+f)(\bar z-z_i){\textstyle\prod_{k\neq i}}K(\bar z-z_k)
-(\delta_0+f)(\bar z-z_0){\textstyle\sum_i}K'(\bar z-z_i){\textstyle\prod_{k\neq i}}K(\bar z-z_k)
\\
&\quad-cK'(z_0-\bar z)
{\textstyle\sum_{i\neq j}}K'(\bar z-z_i)K'(\bar z-z_j)
{\textstyle\prod_{k\neq i,j}}K(\bar z-z_k)
\\
&=\d_t^{[\bar z]}\Big(K'(z_0-\bar z){\textstyle\prod_{k}}K(\bar z-z_k)\Big)
-\d_x^{[\bar z]}\Big((\d_t K)(z_0-\bar z){\textstyle\prod_{k}}K(\bar z-z_k)\Big)
\\
&\quad-c\d_x^{[\bar z]}
\Big(K'(z_0-\bar z){{\textstyle\sum_{i}}K'(\bar z-z_i)\textstyle\prod_{k\neq i}}K(\bar z-z_k)\Big).
\end{equs}
The integral of $Q_\eps$ against the right-hand sides of \eqref{eq:01}
and \eqref{eq:02} vanishes as before due to \eqref{eq:translation vanish},
and hence the proof can be concluded precisely as before.

\end{proof}

\section{Proof of the main theorem}\label{sec:proof}

\subsection{The setup}\label{sec:setup}
We briefly recall the setup of \cite{GH_Q}.
For simplicity for certain `sufficiently large' indices from therein we simply take $10$, which suffices for
\eqref{eq:main}, but which does not play any important role.
The approach of \cite{GH_Q} relies on a transformation, which is of course formal for rough $\xi$, but is elementary to check for smooth $\xi$.
Let, for $c\in[\lambda,\lambda^{-1}]$, $P(c,\cdot)$ be the Green's function of the operator $\d_t-c\d_x^2$.
The aforementioned transformation then establishes that \eqref{eq:main} is equivalent, locally in time, to
\begin{equs}[eq:main-t]
u & =I(a(u),\hat f)
\\
\hat f &=\big(1-a'(u)I_c(a(u),\hat f)\big)\xi + (aa'')(u)(\d_x u)^2 I_c (a(u),\hat f)
\\
&\quad+ (a (a')^2)(u)(\d_x u)^2 I_{cc}(a(u),\hat f) + 
2(aa')(u)(\d_x u) I_{cx}(a(u),\hat f),
\end{equs}
where the operators $I_\alpha$, for multiindices $\alpha$ in $c$ and $x$ are defined as
\begin{equ}\label{eq:I}
I_\alpha(b,f)(z)=\int (\d_\alpha P)(b(z),z-z')f(z')\,dz'
\end{equ}
and $I=I_\emptyset$.
Note that $I$ actually extends to $f$ with regularity above $-2$, in which case
\eqref{eq:I} of course needs to be interpreted in the appropriate distributional sense.

One can formulate \eqref{eq:main-t} in the theory of regularity structures as follows.
Start with the regularity structure built as in \cite{H0,BHZ} for the generalised KPZ equation and denote the set of basis vectors (`symbols') by $\cW$,
and the ones with negative degree by $\cW_-$.
Define the `number of integrations' $[\tau]$ recursively by setting
\begin{equ}
\;[X^k]=[\Xi]=0,\quad[\tau\bar\tau]=[\tau]+[\bar\tau],\quad[\cI\tau]=[\cI'\tau]=[\tau]+1.
\end{equ}
Denote $\cB=\cC^{-10}([\lambda,\lambda^{-1}])$ and write $\cB_k$ for the $k$-fold tensor product of $\cB$
with itself, completed under the projective cross norm.
In particular, we have a canonical dense embedding of $\cB_k\otimes \cB_\ell$ into $\cB_{k+\ell}$.
We also use the convention $\cB_0=\R$. 
We then construct
a regularity structure $\scT$ in such a way 
that each symbol $\tau \in\cW$ determines an infinite-dimensional subspace $\cT_\tau$ of the structure 
space $\cT$, isometric to $\cB_{[\tau]}$. To wit, we set
\begin{equ}\label{eq:T}
\cT=\bigoplus_{\alpha} \cT_\alpha\;,\quad
\cT_\alpha:=\bigoplus_{|\tau| = \alpha} T_\tau\;,\quad
\cT_\tau := \cB_{[\tau]}\otimes\span\{\tau\}\;,\quad
\end{equ} 
and equip the spaces $\cT_\alpha$ with their natural norms.
The structure group plays no explicit role for us in this article so we do not address it, the interested reader can find the details in \cite{GH_Q}.
The abstract differentiation, multiplication,
and integration operators on $\cT$ are defined by
\begin{equs}
\scD (\zeta\otimes\tau) & =\zeta\otimes D\tau,
\\
(\zeta\otimes\tau) (\bar \zeta\otimes\bar\tau) & = (\zeta \otimes \bar \zeta)\otimes \tau\bar \tau,
\\
\cI^{\zeta} (\bar \zeta\otimes\tau) & = (\zeta\otimes \bar \zeta)\otimes  \cI\tau.
\end{equs}
Note in particular that the we have a whole family of integration operators $(\cI^\zeta)_{\zeta\in\cB}$.
Note also that the multiplication in general is \emph{not} commutative.
Take a family of kernels $(K^{(c)})_{c\in [\lambda,\lambda^{-1}]}$, which,
along with their derivatives with respect to $c$ up to any finite order,
are uniformly compactly supported and $2$-smoothing in the 
sense of \cite[As~5.1]{H0}.
We will denote $K^\zeta=\zeta(K^{(\cdot)})$ for $\zeta\in\cB$ and $K^{c;\ell}=K^{\d^\ell\delta_c}$.

In the notation of Section \ref{sec:cancel} we set $\hat\cW$ to be the set of all symbols obtained by repeated uses of integration and multiplication. 
Let $\PPi_\eps$ be the canonical model built from $\xi^\eps$ for $\eps>0$, where the dependence on the mollifier $\rho$, which corresponds to $\theta$ in the framework of Section \ref{sec:cancel}, is suppressed.
The fact that Assumption \ref{as:reno} holds
follow from that, due to the spatial symmetry, one has
\begin{equs}
\E & \PPi_\eps\big(\delta_c\otimes\delta_{c'}\otimes\delta_{c''}\otimes\<7>\big)(0)
=\E \PPi_\eps\big(\delta_c\otimes\delta_{c'}\otimes\<4>\big)(0)
=\E\PPi_\eps\big(\delta_c\otimes\delta_{c'}\otimes\<5>\big)(0)
\\
&
=\E\PPi_\eps\big(\delta_c\otimes\<3x>\big)(0)
=\E\PPi_\eps\big(\delta_c\otimes\<3x2>\big)(0)
=\E\PPi_\eps\big(\delta_c\otimes\delta_{c'}\otimes\<2x>\big)(0)
=0
\end{equs}
where $\<0x>$ stands for $X\Xi$.
We then set $\PPi_\eps^{\Sym}:=\PPi_\eps M_{[g_\eps]}$, where $g_\eps$ is from Corollary \ref{cor:main}, and denote the limiting model by $\PPi^{\Sym}=\hat\PPi M_{[g]}$.

We define the maps the $\cK^\zeta$ by replacing $\cI$ and $K$ in
\cite[Eq~5.15]{H0} by $\cI^{\zeta}$ and $K^\zeta$, respectively.
As before, we denote $\cK^{c;\ell}:=\cK^{\partial^\ell\delta_c}$
We can now introduce the lift of the operator $I_\alpha$.
Take two modelled distributions $b$ and $f$ and set $\bar b=\scal{b,\bone}$, $\hat b=b-\bar b$.
If $\d_\alpha=\d_c^k\d_x^m$, then we define
\begin{equ}\label{frI}
\frI_\alpha(b,f)(z):=\sum_{|\ell|\le 10}\frac{(\hat b(z))^{ \ell}}{\ell!}
(\scD^m\cK^{\bar b(z);k+\ell} f)(z)\;.
\end{equ}
It is shown in \cite{GH_Q} that the maps $\frI_\alpha$ satisfy the natural Schauder-estimates on appropriate spaces of modelled distributions.
Assuming for the moment $u_0=0$, the abstract counterpart of \eqref{eq:main-t} then yields the object $u$ claimed in Theorem \ref{thm:main}: We set $u=\cR U$,
where $U$ is the obtained by solving, with respect to the model $\PPi^{\Sym}$, 
the system of abstract equations
\begin{equs}[eq:system]
U &= \frI(a(U), \hat \cF) \;,\\
\hat \cF&= (1- V_c a'(U) )\Xi + 2 V_{cx} a(U)a'(U)\scD U + V_{cc} a(U) (a'(U))^2 (\scD U)^2\\
&\quad + V_c a(U) a''(U) (\scD U)^2
\;,\\
V_\alpha &= \frI_\alpha(a(U), \hat \cF)\;,\quad\quad
\text{for }\alpha=c,cc,cx.
\end{equs}
We will also encounter $V_{ccc}$,
although it does not explicitly appear in the equation \eqref{eq:system}.
For general initial condition $u_0$, one has to include an additional variant of the operators $\frI$
(denoted by $\hat\frI$ in \cite[Eq 4.6]{GH_Q}) in the first and third component of \eqref{eq:system}, but since they do not effect main line of the argument at all, they will be omitted for simplicity.

To prove the theorem, we need to show that if $U^\eps$ is obtained from solving \eqref{eq:system} with respect to $\PPi_\eps^{\Sym}$,
then $u^\eps:=\cR U^\eps$ solves \eqref{eq:main approx}.

The plan is similar to the usual derivation of renormalised equations.
First we use the abstract equation \eqref{eq:system} to derive the form of the expansion of the solution $(U,V_c,V_{cc},V_{cx})$ as well as the `right-hand-side' $\hat\cF$.
That is, for each tree $\tau$ we express the coefficient\footnote{In our terminology the `coefficients' include the distributions attached to the trees. For example in a modelled distribution of the form $H(z)=a_1(z)\zeta_1\otimes\<r1>+a_2(z)\zeta_2(z)\otimes\<r1>$ we say $h_{\<r1>}=a_1\zeta_1+a_2\zeta_2$.}
$u_\tau$ of $\tau$ in $U$, $(v_c)_\tau$ in $V_c$, etc. 
The action of the renormalisation map $M_{[g_\eps]}$ on $\hat \cF$ then produces for each tree $\tau$ a counterterm $g_\eps\big(\hat f_\tau\otimes\tau\big)$ in \eqref{eq:main-t}.
There are two factors complicating this plan.
Firstly, the expansions will have way more terms than one is used to in standard examples like the ones in \cite{H0} - each tree can appear with several different parametrisation on each of its integration edges.
Secondly, the renormalisation of different trees cannot be treated separately: a large number of cancellations have to be exploited to eliminate all nonlocal counterterms and arrive to the (local) ones stated in Theorem \ref{thm:main}. All of these cancellations will come from applications of Lemma \ref{lem:Symmetry}.

\subsection{Notational conventions}\label{sec:notations}
To organise our calculation, let us introduce a couple of shorthand notation.
Firstly, we drop the index $\eps$, but keep it in mind that the solution $(U,V_c,V_{cc},V_{cx})$ we are considering is with respect to the renormalised smooth model $\PPi_\eps^{\Sym}$.
Fix a space-time point $z$ and in the sequel omit the argument $z$ from any function of space-time (scalar-valued and $\cT$-valued alike).
We also omit the $u$ argument from $a$ or any of its derivatives.

In additional to the graphical conventions of Example \ref{example}, we use squares like $\<Z>\,,\<Z2>\,,$
for generic trees, and their color simply serves to distinguish between different ones in the same formula.
Since all symbols appearing in the expansion of the solution
are of the form $\zeta\otimes\tau$, where $\zeta$ is a tensor product of derivatives of $\delta_{a}$, we set the shorthand
$\scal{i_1,\ldots, i_k}=\d^{i_1}\delta_{a}\otimes\cdots\otimes\d^{i_k}\delta_{a}$. 
Furthermore, to ease the reading, we rearrange the order of tensor products.
Given a pictorial representation of a tree,
the ordering is always top-bottom, left-right, but which one takes precedence will change occasionally.
In the notation $\scal{\cdot}$ the order is a) vertical position of the parents b) horizontal position of the parents c) horizontal position of the children (recall that the parent vertex of an edge is the one closer to the root). 
For example,
\begin{equ}
\scal{0,1,2}\otimes\<AMA>=\Xi\big(\cI^{\d^2\delta_{a}}
(\Xi(\cI^{\delta_{a}}\Xi)(\cI^{\d\delta_{a}}\Xi)\big).
\end{equ}
From time to time different ordering of the parameters will be more natural.
In $\scal{\cdot}^\rarr$ we list the parameters in order of a) horizontal position of the parents b) horizontal position of the children c) vertical position of the children.
Finally, in $\scal{\cdot}^\darr$ the order is based on a) vertical position of the parents b) vertical position of the children c) horizontal position of the children.
As examples,
\begin{equ}
\scal{i,j,k,\ell}^\rightarrow\otimes\<S1>=\scal{i,\ell,j,k}\otimes\<S1>\;,
\qquad
\scal{i,j,k,\ell}^\darr\otimes\<S4>=\scal{i,k,j,\ell}\otimes\<S4>.
\end{equ}
We emphasize that these notions all depend on the given pictorial representation.
We further set
\begin{equ}
\sscal{k}_\ell=\sum_{\sum\alpha_i=k}\frac{k!}{\alpha!}\scal{\alpha_1,\ldots,\alpha_\ell}.
\end{equ}
The notation is set up to condense more complicated cancellations.
For example, while Lemma \ref{lem:Symmetry} at first sight only gives
\begin{equ}
g_\eps\big(\scal{0}\otimes\<3>\big) = g_\eps\big( a\sscal{0}_2\otimes\<2>\big),
\end{equ}
one can differentiate this $k$ times with respect to the parameter and obtain
\begin{equ}
g_\eps\big(\scal{k}\otimes\<3>\big) = g_\eps\big(\big(a\sscal{k}_2+k\sscal{k-1}_2\big)\otimes\<2>\big).
\end{equ}
The notations $\sscal{k}^\rarr_\ell,\sscal{k}^\darr_\ell$ are understood analogously.

Recall that $u=\cR U$ and write $v_\alpha=\cR V_\alpha$.
In general, the coefficient of a symbol $\tau\in\cW$ in $U$ will be denoted 
by $u_\tau$, and similarly for $\hat\cF$ and $V_\alpha$.
Some combination of these functions will repeatedly occur: 
\begin{equ}
q=1-v_ca',
\end{equ}
\begin{equ}
p_{c}=\tfrac{1}{q}\big(v_{c}a''+v_{cc}(a')^2\big),\qquad
p_{cc}=\tfrac{1}{q}\big(v_{cc}a''+v_{ccc}(a')^2\big),\qquad
\hat p_c=\tfrac{1}{q}(2 v_{cc}a'a''+v_ca''').
\end{equ}
One important role of $q$ is that, precisely as in \cite{GH_Q}, for short times it is nonzero and $u$ solves an
equation just like \eqref{eq:main}, but with an additional term
\begin{equ}\label{eq:correction}
\tfrac{1}{q}\sum_{\tau\in\cW_-}
g_\eps\big(\hat f_{\tau} \otimes\tau\big).
\end{equ}
appearing on the right-hand side.
Note that while $\cW_-$ is the usual set of negative degree symbols for the generalised KPZ equation,
for each $\tau$, $\hat f_\tau$ is the linear combination of many different distributions, see e.g. \eqref{eq:large} below, and so
\eqref{eq:correction} is in fact a sum of several hundred terms.
Our goal to show that
this sum
is nothing but the counterterm specified in \eqref{eq:main approx},
with the appropriate choice of $C^\eps_\cdot,\bar C^\eps_\cdot,\tilde C^\eps_\cdot$.

To this end, given $\tau\in \cW_-$, for any $k, i_1,\ldots i_{[\tau]}\in\N$, and any function of the form
$h=qa^ka', qa^k(a')^3,qa^ka'a''$, $k\in\N$, we denote $h\scal{i_1,\ldots,i_{[\tau]}}\asymp 0$.
This reflects that the contributions of all terms of the form $h\scal{i_1,\ldots,i_{[\tau]}}\otimes\tau$ to the renormalisation are precisely as required.

We will repeatedly apply integration by parts identities from Section \ref{sec:cancel}.
By writing 
\begin{equ}\label{id n}
\zeta_1\otimes\tau_1\sim\zeta_2\otimes\tau_2			\tag{n}
\end{equ} 
we mean $g_\eps(\zeta_1\otimes\tau_1)=g_\eps(\zeta_2\otimes\tau_2)$.
Given such an identity, we may simplify the expansions of $\hat f_{\tau_1}$ and $\hat f_{\tau_2}$ simultaneously, provided they contain the same multiple of $\zeta_1$ and $\zeta_2$, respectively.
This will be denoted by
\begin{equs}
\hat f_{\tau_1}&= h\zeta_1+\bar\zeta_1
\app{n}\bar\zeta_1,
\\
\hat f_{\tau_2}&= h\zeta_2+\bar\zeta_2\app{n}\bar\zeta_2.
\end{equs}
Here $n$ will be some Roman numeral and $h$ some function.
We emphasise that $\app{n}$ is not a single relation but has to be read in pairs (or, in more complicated situations, triples, quadruples, etc.).
By $\approx$ we mean the summary of all previous simplifications of the coefficient of a given symbol, either by $\asymp$ or $\app{n}$.

It is clear from Gaussianity that only $\tau$-s with $2$ or $4$ instances of $\Xi$ contribute to \eqref{eq:correction}, we denote the corresponding subsets of $\cW_-$ by $\cW_-^2,\cW_-^4$.
With all this, our goal can be summarised as showing $\hat f_\tau\approx0$ for all $\tau\in\cW_-^2,\cW_-^4$.

Finally, let us mention that often the integration by parts will look a bit simpler
due to symbols in $\cW_-^G:=\{\<AMA>\,,\<AMB>\,,\<AMM>\,,\<BMA>\,,\<BMB>\,,\<BMM>\}$
having vanishing contribution to the renormalisation.
This is again a consequence of Gaussianity.
For example, the second to last line in Example \ref{example} simplifies to
\begin{equ}
\sscal{0}_5\otimes\<ABB>\sim a\sscal{0}_6\otimes\<BBB>.
\end{equ}
It is also worth noting and will be often used that these formulae do not require all edges to have the same
parameter. In particular, edges that do not `play' in a given integration by parts, can have arbitrary derivatives, so for example the above relation is true more generally:
\begin{equ}
\scal{0,0,i,j,k}\otimes\<ABB>\sim a\scal{0,0,0,i,k,j}\otimes\<BBB>.
\end{equ}

\begin{remark}\label{rem:Gauss}
One possible way to extend our result to the non-Gaussian case would be to 1) calculate the coefficient $\hat f_\tau$ for $\tau\in\cW^G_-$;
2) keep track of how the performing the steps below effect these coefficients;
3) use the cancellations
relating only elements of $\cW_-^G$ to each other (there are in fact $5$ of these) to further simplify all of these coefficient to $0$ in the sense of $\approx$.
To avoid cluttering the already lengthy computation below, we refrain from this generality.
\end{remark}

\subsection{Some recursions for the coefficients}
First we want to treat the contributions from $\cW^2_-$, but for later use some steps are formulated in a more general way.
In fact, the terms in $\cW^2_-$ had already been treated in \cite{GH_Q}, but for the sake of completeness, as well as to
illustrate the use of some of the notations above, we include the argument.

First of all, it will be repeatedly used that for any $\<Z>\in\cW_-$
one has
\begin{equ}\label{eq:integral with the q}
u_{\<ZI>}=\tfrac{1}{q}\hat f_{\<Z>}\otimes\scal{0}.
\end{equ}
Indeed, this follows from the fact that in $\frI(a(U),\hat\cF)$ the symbol $\<ZI>$ appears twice: once in the $\ell=0$ and once in the $\ell=1$ term. 
Since, by definition, $\cK^{a(u),1}\hat\cF=v_c\bone+(\ldots)$,
one gets the equation
\begin{equ}\label{eq:fixed point coeff}
u_{\<ZI>}=\hat f_{\<Z>}\otimes\scal{0}+a'u_{\<ZI>}v_c ,
\end{equ}
and from it, \eqref{eq:integral with the q}.
One therefore also has
\begin{equs}
(v_{c})_{\<ZI>}&=\hat f_{\<Z>}\otimes\scal{1}+a'u_{\<ZI>}v_{cc}=
\hat f_{\<Z>}\otimes\big(\scal{1}+\tfrac{1}{q}a'v_{cc}\scal{0}\big)\,,
\\
(v_{cc})_{\<ZI>}&=\hat f_{\<Z>}\otimes\scal{2}+a'u_{\<ZI>}v_{ccc}=
\hat f_{\<Z>}\otimes\big(\scal{2}+\tfrac{1}{q}a'v_{ccc}\scal{0}\big)\,,
\\
(v_{cx})_{\<ZJ>}&=\hat f_{\<Z>}\otimes\scal{1}.
\end{equs}
Let $2^*$ denote $2$ for $\<Z>\neq\<Z2>$ and $1$ for $\<Z>=\<Z2>$.
The above then yields the following recursions
\begin{equs}
\hat f_{\<ZA>}
&=-(v_c)_{\<ZI>}a'-v_ca''u_{\<ZI>}
\\
&=-p_c\hat f_{\<Z>}\otimes\scal{0}-a'\hat f_{\<Z>}\otimes\scal{1}\,, \label{eq:rec1}
\\
\hat f_{\<ZZ2>}&=2^*aa'(v_{cx})_{\<ZJ>}\otimes^{\ast}u_{\<Z2I>}+2^*aa'u_{\<ZI>}\otimes^{\ast}(v_{cx})_{\<Z2J>}
+2^*v_{cc}a(a')^2u_{\<ZI>}\otimes^{\ast}u_{\<Z2I>}+2^*v_caa''u_{\<ZI>}\otimes^{\ast}u_{\<Z2I>}
\\
&=\tfrac{1}{q}
\hat f_{\<Z>}\otimes\hat f_{\<Z2>}\otimes
\big(2^*aa'\sscal{1}_2
+2^*ap_c\sscal{0}_2\big)
,\label{eq:rec2}
\end{equs}
where we denoted by $\otimes^\ast$ when the parameter derivatives are slightly rearranged after concatenation (since the way they should be arranged is pretty obvious, we prefer to avoid making this completely precise by introducing further notations).
One obviously has $\hat f_{\<0>}=q$ and we recall the cancellation
\begin{equ}\label{id: i}
\scal{0}\otimes\<3>\sim a\sscal{0}_2\otimes\<2>.\tag{i}
\end{equ}
Thus we can write
\begin{equs}
\hat f_{\<3>}&=-qp_c\scal{0}-qa'\scal{1}
\app{i}-qa'\scal{1}
\asymp 0,
\\
\hat f_{\<2>}
&=qaa'\sscal{1}_2
+qap_c\sscal{0}_2
\app{i}qaa'\sscal{1}_2
\asymp 0.
\end{equs}
The rest of the article is devoted to show $\hat f_{\tau}\approx 0$ for
\begin{equ}
\tau\in\{
\<AAA>\,,\<AAB>\,,\<AAM>\,,\<ABA>\,,\<ABB>\,,\<ABM>\,,
\<BAA>\,,\<BAB>\,,\<BAM>\,,\<BBA>\,,\<BBB>\,,\<BBM>\,,
\<S1>\,,\<S2>\,,\<S3>\,,\<S4>\,,\<S5>\,,
\}.
\end{equ}

The recursions \eqref{eq:rec1}-\eqref{eq:rec2} yield the coefficient of all $8$ of the
above symbols
that are built from the repeated operations $\<Z>\to\<ZA>$, $\<Z>\to\<ZB>$, as well as those of $\<S1>$, $\<S2>$, $\<S3>$.
For the $6$ remaining symbols, however, we have to take into account the fact that $u$ does not only contain symbols of the form $\<ZI>$.
Indeed, one has, by a similar argument as the one leading to \eqref{eq:fixed point coeff},
\begin{equs}[eq:u second order]
u_{\<ZIZ2I>}
&=\tfrac{2^*}{2}\big(
a'u_{\<ZI>}\otimes^\ast\hat f_{\<Z2>}\otimes\scal{1}+a'\hat f_{\<Z>}\otimes u_{\<Z2I>}\otimes^\ast\scal{1}\big)
\\
&\quad
+a'u_{\<ZIZ2I>}v_c
+\tfrac{2^*}{2}\big(a''u_{\<ZI>}\otimes^\ast u_{\<Z2I>}v_c
+(a')^2u_{\<ZI>}\otimes^\ast u_{\<Z2I>}v_{cc}\big)
\\
&
=a'u_{\<ZIZ2I>}v_c+\tfrac{1}{q}\tfrac{2^*}{2}
\hat f_{\<Z>}\otimes\hat f_{\<Z2>}\otimes
\big(a'\sscal{1}_{2}+p_c\sscal{0}_2\big)
\\
&=
\tfrac{1}{q^2}\tfrac{2^*}{2}
\hat f_{\<Z>}\otimes\hat f_{\<Z2>}\otimes
\big(a'\sscal{1}_2+p_c\sscal{0}_2\big)
.
\end{equs}
One also easily gets
\begin{equ}
(v_{cx})_{\<ZIZ2J>}
=a'u_{\<ZI>}\otimes^\ast\hat f_{\<Z2>}\otimes\scal{2}
=\tfrac{1}{q}a'\hat f_{\<Z>}\otimes\hat f_{\<Z2>}\otimes\scal{0,2}.
\end{equ}
From $u_{\<ZIZ2I>}$ we also obtain (here we will only need the case $\<Z>\neq\<Z2>$)
\begin{equs}
(v_{c})_{\<ZIZ2I>}&=a'u_{\<ZI>}\otimes^\ast\hat f_{\<Z2>}\otimes\scal{2}
+a'\hat f_{\<Z>}\otimes u_{\<Z2I>}\otimes^\ast\scal{2}
\\
&\quad
+a'u_{\<ZIZ2I>}v_{cc}
+a''u_{\<ZI>}\otimes^\ast u_{\<Z2I>}v_{cc}+(a')^2u_{\<ZI>}\otimes^\ast u_{\<Z2I>}v_{ccc}
\\
&=
\hat f_{\<Z>}\otimes\hat f_{\<Z2>}\otimes
\tfrac{1}{q}\big[a'\scal{0,2}+a'\scal{2,0}
+\tfrac{1}{q}a'v_{cc}\big(a'\sscal{1}_{2}+p_c\sscal{0}_2\big)
+p_{cc}\sscal{0}_2
\big].
\end{equs}
It turns out that due to the regularity and the Gaussianity of our noise, we will not need to calculate the contributions to $u$ of products with more than $2$ terms.
From now on all product of parameter derivatives will denote a simple concatenation so we drop $\otimes$ from the notation.
The above formulae then yield the three more complicated recursions: for $\<Z>=\<3>\;,\<2>$ (although we only really need $\<Z>\neq\<0>$) we have
\begin{equs}
\hat f_{\<ZAM>}
&=-a''(v_c)_{\<ZI>}u_{\<1>}-a''u_{\<ZI>}(v_c)_{\<1>}-v_ca''u_{\<ZIM>}-a'(v_c)_{\<ZIM>}-v_ca'''u_{\<ZI>}u_{\<1>}
\\
&=-a''\hat f_{\<Z>}			\sscal{1}_{2}
-2\tfrac{1}{q}a'a''v_{cc}\hat f_{\<Z>}			\sscal{0}_2
-p_c\hat f_{\<Z>}			\big(a'\sscal{1}_{2}+p_c\sscal{0}_2\big)
\\
&\quad-\hat f_{\<Z>}			\big((a')^2\scal{0,2}+(a')^2\scal{2,0}+a'p_{cc}\sscal{0}_2\big)
-\tfrac{1}{q}v_ca'''\hat f_{\<Z>}			\sscal{0}_2
\\
&=\hat f_{\<Z>}		\big[-\big(\hat p_c+p_c^2+a'p_{cc}\big)\sscal{0}_2
-\big(p_ca'+a''\big)\sscal{1}_2
-(a')^2\sscal{2}_2
\\
&\quad+ 2(a')^2\scal{1,1}\big].
\end{equs}
Next, we have
\begin{equs}
\hat f_{\<SZ>}&=2aa'(v_{cx})_{\<ZIJM>}u_{\<1>}
+2\big((a')^2+aa''\big)(v_{cx})_{\<r1>}u_{\<ZI>}u_{\<1>}
+2aa'(v_{cx})_{\<r1>}u_{\<ZIM>}
\\
&\quad
+2v_{cc}a(a')^2u_{\<1>}u_{\<ZIM>}
+u_{\<1>}\big((v_{cc})_{\<ZI>}a(a')^2+v_{cc}\big((a')^3+2aa'a''\big)u_{\<ZI>}\big)u_{\<1>}
\\
&\quad
+2v_{c}aa''u_{\<1>}u_{\<ZIM>}
+u_{\<1>}\big((v_{c})_{\<ZI>}aa''+v_{c}\big(a'a''+aa'''\big)u_{\<ZI>}\big)u_{\<1>}
\\
&=
2aa'(v_{cx})_{\<ZIJM>}u_{\<1>}
+2\big((a')^2+aa''\big)(v_{cx})_{\<r1>}u_{\<ZI>}u_{\<1>}
+2aa'(v_{cx})_{\<r1>}u_{\<ZIM>}
\\
&\quad
+2 q a p_c u_{\<1>}u_{\<ZIM>}
+ q a' p_c u_{\<1>} u_{\<ZI>}u_{\<1>}
+ q a \hat p_c u_{\<1>} u_{\<ZI>}u_{\<1>}
\\
&\quad
+a(a')^2  u_{\<1>} (v_{cc})_{\<ZI>}u_{\<1>}
+ aa''     u_{\<1>} (v_{c})_{\<ZI>}u_{\<1>}
\\
&=
\hat f_{\<Z>}			\big[2a(a')^2\scal{0,0,2}
+2 \big((a')^2+aa''\big) \scal{0,0,1}
\\
&\qquad\qquad
+2a(a')^2\scal{1,0,1}
+2a(a')^2\scal{1,1,0}
+2aa'p_c\scal{1,0,0}
\\
&\quad
+2aa'p_c\scal{0}			\sscal{1}_{2}
+2a(p_c)^2\scal{0,0,0}
+a'p_c\scal{0,0,0}
+a\hat p_c\scal{0,0,0}
\\
&\quad
+a(a')^2\scal{0,2,0}
+aa''\scal{0,1,0}
+aa'p_{cc}\scal{0,0,0}\big]
\\
&=
\hat f_{\<Z>}			\big[\big(a\hat p_c+2a(p_c)^2+aa'p_{cc}+a'p_c\big)\sscal{0}_3
+\big(aa''+2aa'p_c\big)\sscal{1}_3
+a(a')^2\sscal{2}_3
\\
&\quad
+2(a')^2\scal{0,0,1}-2a(a')^2\scal{1,1,0}\big].
\end{equs}
Finally, one can write
\begin{equs}
\hat f_{\<ZBM>}&=2aa'(v_{cx})_{\<ZJM>}u_{\<1>}
+2aa'u_{\<ZI>}(v_{cx})_{\<0JM>}
+2\big((a')^2+aa''\big)(v_{cx})_{\<ZJ>}(u_{\<1>})^2
+4aa'(v_{cx})_{\<ZJ>}u_{\<0IM>}
\\
&\qquad\qquad
+2\big((a')^2+aa''\big)u_{\<ZI>}u_{\<1>}(v_{cx})_{\<r1>}
+2aa'u_{\<ZIM>}(v_{cx})_{\<r1>}
\\
&\quad
+2v_{cc}a(a')^2\big(u_{\<ZIM>}u_{\<1>}+2u_{\<ZI>}u_{\<0IM>}\big)
+2u_{\<ZI>}\big((v_{cc})_{\<1>}a(a')^2+v_{cc}\big((a')^3+2aa'a''\big)u_{\<1>}\big)u_{\<1>}
\\
&\quad
+2v_{c}aa''\big(u_{\<ZIM>}u_{\<1>}+2u_{\<ZI>}u_{\<0IM>}\big)
+2u_{\<ZI>}\big((v_{c})_{\<1>}aa''+v_{c}\big(a'a''+aa'''\big)u_{\<1>}\big)u_{\<1>}
\\
&=2aa'(v_{cx})_{\<ZJM>}u_{\<1>}
+2aa'u_{\<ZI>}(v_{cx})_{\<0JM>}
+2\big((a')^2+aa''\big)(v_{cx})_{\<ZJ>}(u_{\<1>})^2
+4aa'(v_{cx})_{\<ZJ>}u_{\<0IM>}
\\
&\qquad\qquad
+2\big((a')^2+aa''\big)u_{\<ZI>}u_{\<1>}(v_{cx})_{\<r1>}
+2aa'u_{\<ZIM>}(v_{cx})_{\<r1>}
\\
&\quad
+2 q a p_c \big(u_{\<ZIM>}u_{\<1>}+u_{\<ZI>}2u_{\<0IM>}\big)
+2 q a' p_c u_{\<ZI>}(u_{\<1>})^2
+2 q a \hat p_c u_{\<ZI>}(u_{\<1>})^2
\\
&\quad
+2a(a')^2u_{\<ZI>}(v_{cc})_{\<1>}u_{\<1>}
+2aa''u_{\<ZI>}(v_{c})_{\<1>}u_{\<1>}		\label{eq:large}
\\
&=
\hat f_{\<Z>}			\big[2a(a')^2\scal{2,0,0}
+2a(a')^2\scal{0,0,2}
+2\big((a')^2+aa''\big)\scal{1,0,0}
\\
&\qquad\qquad
+2 a (a')^2\scal{1,1,0}
+2 a (a')^2\scal{1,0,1}
+2 a a' p_c \scal{1,0,0}
\\
&\qquad\qquad
+2 \big((a')^2+aa''\big) \scal{0,0,1}
\\
&\qquad\qquad
+2a(a')^2\scal{1,0,1}
+2a(a')^2\scal{0,1,1}
+2aa'p_c\scal{0,0,1}
\\
&\quad
+2aa'p_c\big(\sscal{1}_{2}			\scal{0}+\scal{0}			\sscal{1}_{2}\big)
+4a (p_c)^2\scal{0,0,0}
\\
&\qquad\qquad
+2a'p_c\scal{0,0,0}
+2a\hat p_c\scal{0,0,0}
\\
&\quad
+2a(a')^2\scal{0,2,0}
+2aa''\scal{0,1,0}
+2aa'p_{cc}\scal{0,0,0}\big]			
\\
&=
\hat f_{\<Z>}			\big[2\big(a\hat p_c+2a(p_c)^2+aa'p_{cc}+a'p_c\big)\sscal{0}_3
+2\big(aa''+2aa'p_c\big)\sscal{1}_3
+2a(a')^2\sscal{2}_3
\\
&\quad
+2(a')^2\scal{1,0,0}+2(a')^2\scal{0,0,1}
-2a(a')^2\scal{0,1,1}-2a(a')^2\scal{1,1,0}\big].
\end{equs}
\subsection{Exploiting the cancellations}
To simplify the above complicated expressions, a number of application of the identities from Section \ref{sec:cancel} will be needed.
To give some structure to this lengthy computation, in each smaller step we aim to eliminate (in the sense of $\approx$) some terms of a given type.
\\
\emph{Eliminating coefficients with $\hat p_c$, $p_{cc}$, and $a''$}

The coefficients in the above expressions can be viewed as polynomials in the $6$ variables $a,a',a'',p_c,\hat p_c,p_{cc}$, but terms containing three of these can easily be eliminated.
We have the cancellations
\begin{equ}\label{id ii}
\sscal{\ell}_2\otimes\<ZAM>\sim \big(a\sscal{\ell}_3+\ell\sscal{\ell-1}_3\big)\otimes\big(\<SZ>+2\<ZBM>\big).		\tag{ii}
\end{equ}
Applying this with $\ell=0$, and using the notation $\hat f_{\<Z2>}/\hat f_{\<Z>}=\zeta$ to denote $\hat f_{\<Z2>}=\hat f_{\<Z>}\otimes\zeta$, we can write:
\begin{equs}
\hat f_{\<ZAM>}/\hat f_{\<Z>}
&\app{ii}
-p_c^2\sscal{0}_2
-\big(p_ca'+a''\big)\sscal{1}_2
-(a')^2\sscal{2}_2
+ 2(a')^2\scal{1,1},
\\
\hat f_{\<SZ>}/\hat f_{\<Z>}
&\app{ii}
\big(2a(p_c)^2+a'p_c\big)\sscal{0}_3
+\big(aa''+2aa'p_c\big)\sscal{1}_3
+a(a')^2\sscal{2}_3
\\
&\quad
+2(a')^2\scal{0,0,1}-2a(a')^2\scal{1,1,0},
\\
\hat f_{\<ZBM>}/\hat f_{\<Z>}
&\app{ii}
2\big(2a(p_c)^2+a'p_c\big)\sscal{0}_3
+2\big(aa''+2aa'p_c\big)\sscal{1}_3
+2a(a')^2\sscal{2}_3
\\
&\quad
+2(a')^2\scal{1,0,0}+2(a')^2\scal{0,0,1}
-2a(a')^2\scal{0,1,1}-2a(a')^2\scal{1,1,0}.
\end{equs}
Next we apply \eqref{id ii} with $\ell=1$
\begin{equs}
\hat f_{\<ZAM>}/\hat f_{\<Z>}
&\app{ii}
-p_c^2\sscal{0}_2
-p_ca'\sscal{1}_2
-(a')^2\sscal{2}_2
+ 2(a')^2\scal{1,1},
\\
\hat f_{\<SZ>}/\hat f_{\<Z>}
&\app{ii}
\big(2a(p_c)^2+a'p_c-a''\big)\sscal{0}_3
+2aa'p_c\sscal{1}_3
+a(a')^2\sscal{2}_3
\\
&\quad
+2(a')^2\scal{0,0,1}-2a(a')^2\scal{1,1,0},\label{eq: one of many}
\\
\hat f_{\<ZBM>}/\hat f_{\<Z>}
&\app{ii}
2\big(2a(p_c)^2+a'p_c-a''\big)\sscal{0}_3
+4 aa'p_c\sscal{1}_3
+2a(a')^2\sscal{2}_3
\\
&\quad
+2(a')^2\scal{1,0,0}+2(a')^2\scal{0,0,1}
-2a(a')^2\scal{0,1,1}-2a(a')^2\scal{1,1,0}.\label{eq: one of many'}
\end{equs}
We now write
\begin{equs}
\hat f_{\<S4>}
&\approx
q p_c a''\scal{0}			\sscal{0}_3
+ q a' a'' \scal{1}			\sscal{0}_3
+ \rem,
\\
\hat f_{\<S5>}
&\approx
- q a p_c a''\sscal{0}_2			\sscal{0}_3
- q a a' a'' \sscal{1}_2			\sscal{0}_3
+ \rem,
\end{equs}
where $\rem$ stands for all the terms coming from \eqref{eq: one of many} not including $a''$.
Recalling
\begin{equ}\label{id iii}
\scal{0,i,0,j}\otimes\<S4>\sim a\scal{0,0,i,0,j}\otimes\<S5>			\tag{iii}
\end{equ}
for any $i$ and $j$ (although for the moment we only use $i=j=0$), we have
\begin{equs}
\hat f_{\<S4>}
&\app{iii}
q a' a'' \scal{1}			\sscal{0}_3
+ \rem\asymp\rem,
\\
\hat f_{\<S5>}
&\app{iii}
- q a a' a'' \sscal{1}_2			\sscal{0}_3
+ \rem\asymp\rem.
\end{equs}
We therefore have
\begin{equs}
\hat f_{\<SZ>}/\hat f_{\<Z>}
&\approx
\big(2a(p_c)^2+a'p_c\big)\sscal{0}_3
+2aa'p_c\sscal{1}_3
+a(a')^2\sscal{2}_3
\\
&\quad
+2(a')^2\scal{0,0,1}-2a(a')^2\scal{1,1,0},
\end{equs}
and performing the similar steps in \eqref{eq: one of many'}, also
\begin{equs}
\hat f_{\<ZBM>}/\hat f_{\<Z>}
&\approx
2\big(2a(p_c)^2+a'p_c\big)\sscal{0}_3
+4 aa'p_c\sscal{1}_3
+2a(a')^2\sscal{2}_3
\\
&\quad
+2(a')^2\scal{1,0,0}+2(a')^2\scal{0,0,1}
-2a(a')^2\scal{0,1,1}-2a(a')^2\scal{1,1,0}.
\end{equs}
\\
\emph{A remark and eliminating second derivatives}

Note that above argument could of course be easily repeated with $(a')^2$ in place of $a''$.
Therefore, whenever we arrive to
\begin{equ}
\hat f_{\<SZ>}/\hat f_{\<Z>}\approx c a^k(a')^2\scal{i,0,j}+\rem,
\end{equ}
for some $c\in\R$, $i,j,k\in\N$, we can infer
\begin{equ}
\hat f_{\<SZ>}/\hat f_{\<Z>}\approx \rem.
\end{equ}
This simplification will reappear later in the proof, and will be denoted by $\app{S}$.
The analogous statement of course also holds for $\<ZBM>$. Keep in mind that the parameter in the latter case has to be of the form $\scal{0,i,j}$.
We can therefore readily simplify the above to
\begin{equs}
\hat f_{\<SZ>}/\hat f_{\<Z>}
&\app{S}
\big(2a(p_c)^2+a'p_c\big)\sscal{0}_3
+2aa'p_c\sscal{1}_3
+a(a')^2\sscal{2}_3
\\
&\quad
-2a(a')^2\scal{1,1,0},
\\
\hat f_{\<ZBM>}/\hat f_{\<Z>}
&\app{S}
2\big(2a(p_c)^2+a'p_c\big)\sscal{0}_3
+4 aa'p_c\sscal{1}_3
+2a(a')^2\sscal{2}_3
\\
&\quad
+2(a')^2\scal{1,0,0}
-2a(a')^2\scal{1,1,0}.
\end{equs}
To remove the term with $2$ derivatives, simply apply \eqref{id ii} with $\ell=2$:
\begin{equs}
\hat f_{\<ZAM>}/\hat f_{\<Z>}
&\app{ii}
-p_c^2\sscal{0}_2
-p_ca'\sscal{1}_2
+ 2(a')^2\scal{1,1},
\\
\hat f_{\<SZ>}/\hat f_{\<Z>}
&\app{ii}
\big(2a(p_c)^2+a'p_c\big)\sscal{0}_3
+\big(2aa'p_c-2(a')^2\big)\sscal{1}_3
\\
&\quad
-2a(a')^2\scal{1,1,0},
\\
\hat f_{\<ZBM>}/\hat f_{\<Z>}
&\app{ii}
2\big(2a(p_c)^2+a'p_c\big)\sscal{0}_3
+2 \big(2aa'p_c-2(a')^2\big)\sscal{1}_3
\\
&\quad
+2(a')^2\scal{1,0,0}
-2a(a')^2\scal{1,1,0}.
\end{equs}
\\
\emph{Eliminating symbols of the form $\<ZAA>\;,\<ZBA>\;,\<ZAB>\;,\<ZBB>$}

Next we use the identities
\begin{equ}\label{id iv}
\scal{i,\ell}\otimes\<ZAA>+\scal{i,\ell}\otimes\<ZAM>
\sim
2\scal{i}			\big(a\sscal{\ell}_2+2\ell\sscal{\ell-1}_2\big)
\otimes\<ZAB>\;,		\tag{iv}
\end{equ}
Using this with $i=0,1$, $\ell=0,1$, we get
\begin{equs}
\hat f_{\<ZAA>}/\hat f_{\<Z>}
&=
p_c^2\scal{0,0}+a'p_c\scal{0,1}+a'p_c\scal{1,0}+(a')^2\scal{1,1}
\\
&\app{iv} 0,
\\
\hat f_{\<ZAM>}/\hat f_{\<Z>}
&\approx
-p_c^2\sscal{0}_2
-a'p_c\sscal{1}_2
+ 2(a')^2\scal{1,1},
\\
&\app{iv}
-2p_c^2\sscal{0}_2
-2a'p_c\sscal{1}_2
+(a')^2\scal{1,1},
\\
\hat f_{\<ZAB>}/\hat f_{\<Z>}
&=-2ap_c^2\scal{0}			\sscal{0}_2
-2aa'p_c\scal{0}			\sscal{1}_2
-2aa'p_c\scal{1}			\sscal{0}_2
-2a(a')^2\scal{1}			\sscal{1}_2
\\
&\app{iv}
2a'p_c\scal{0}			\sscal{0}_2
+2(a')^2\scal{1}			\sscal{0}_2.		\label{eq: hmk}
\end{equs}
Now we can use \eqref{id ii} again
\begin{equs}
\hat f_{\<ZAM>}/\hat f_{\<Z>}
&\app{ii}
(a')^2\scal{1,1}.
\\
\hat f_{\<SZ>}/\hat f_{\<Z>}
&\app{ii}
-a'p_c\sscal{0}_3
-2(a')^2\sscal{1}_3
-2a(a')^2\scal{1,1,0},
\\
&\app{S}
-a'p_c\sscal{0}_3
-2(a')^2\scal{0,1,0}
-2a(a')^2\scal{1,1,0},
\\
\hat f_{\<ZBM>}/\hat f_{\<Z>}
&\app{ii}
-2a'p_c\sscal{0}_3
-4 (a')^2\sscal{1}_3
+2(a')^2\scal{1,0,0}
-2a(a')^2\scal{1,1,0}
\\
&\app{S}
-2a'p_c\sscal{0}_3
-2 (a')^2\scal{1,0,0}
-2a(a')^2\scal{1,1,0}.
\end{equs}
Similarly to \eqref{id iv}, we have
\begin{equ}\label{id v}
\scal{i,j,\ell}\otimes\<ZBA>+\scal{i,\ell,j}\otimes\<ZBM>\sim
2\scal{i,j}			\big(a\sscal{\ell}_2+\ell\sscal{\ell-1}_2\big)
\otimes\<ZBB>\;,		\tag{v}
\end{equ}
Hence, just as above, we can  write
\begin{equs}
\hat f_{\<ZBA>}/\hat f_{\<Z>}
&=
-2a\big(
p_c^2\sscal{0}_2			\scal{0}
+a'p_c\sscal{1}_2			\scal{0}
+a'p_c\sscal{0}_2			\scal{1}
+(a')^2\sscal{1}_2			\scal{1}
\big)
\\
&\app{v} 0,
\\
\hat f_{\<ZBM>}/\hat f_{\<Z>}
&\approx
-2a'p_c\sscal{0}_3
-2 (a')^2\scal{1,0,0}
-2a(a')^2\scal{1,1,0}
\\
&\app{v}
-2a'p_c\sscal{0}_3
-2 (a')^2\scal{1,0,0}
-2a(a')^2\scal{1,1,0}
\\
&\quad
+2a p_c^2\sscal{0}_3+2aa'p_c\sscal{1}_3+2a(a')^2\scal{1,1,0}+2a(a')^2\scal{0,1,1}
\\
&
\app{S}
\big(2ap_c^2-2a'p_c\big)\sscal{0}_3
+2aa'p_c\sscal{1}_3
-2 (a')^2\scal{1,0,0},
\\
\hat f_{\<ZBB>}/\hat f_{\<Z>}
&=
4a^2\big(
p_c^2\sscal{0}_2			\sscal{0}_2
+a'p_c\sscal{1}_2			\sscal{0}_2
+a'p_c\sscal{0}_2			\sscal{1}_2
+(a')^2\sscal{1}_2			\sscal{1}_2
\big)
\\
&\app{v}
-4aa'p_c\sscal{0}_4
-4a(a')^2\sscal{1}_2			\sscal{0}_2,
\end{equs}
Let us now compare the coefficients of $\<AAB>$ and $\<BAB>\;$.
Using \eqref{eq: hmk} and that  $q(a')^3\scal{1,1}			\sscal{0}_2$, $qa(a')^3\sscal{1}_2			\scal{1}			\sscal{0}_2\asymp 0$,
one can write
\begin{equs}
\hat f_{\<AAB>}&\approx -2qa'
\big(p_c^2\scal{0,0}+a'p_c\scal{1,0}+a'p_c\scal{0,1}\big)			\sscal{0}_2,
\\
\hat f_{\<BAB>}&\approx 2qaa'
\big(
p_c^2\sscal{0}			\scal{0}
+a'p_c\sscal{1}			\scal{0}
+a'p_c\sscal{0}			\scal{1}
\big)			\sscal{0}_2.
\end{equs}
Using 
\begin{equ}\label{id vi}
\sscal{\ell}_2\sscal{0}_2\otimes\<AAB>
\sim
\big
(a\sscal{\ell}_3
+ \ell\sscal{\ell-1}_3
\big)			\sscal{0}_2
\otimes
\<BAB>\tag{vi}
\end{equ}
with $\ell=0,1$, we get
\begin{equs}
\hat f_{\<AAB>}&\app{vi}
0,
\qquad\qquad
\hat f_{\<BAB>}&\app{vi} -2q(a')^2p_c\sscal{0}_5.
\end{equs}
Very similar calculation shows
\begin{equs}
\hat f_{\<ABB>}&\approx
0,
\qquad\qquad
\hat f_{\<BBB>}&\approx 4qa(a')^2p_c\sscal{0}_6.
\end{equs}
Hence, by
\begin{equ}\label{vii}
\sscal{0}_5\otimes\<BAB>
-\sscal{0}_5\otimes \<BBM>
-\sscal{0}_5\otimes\<S5>
\sim 2a\sscal{0}_6\otimes\<BBB>\;,\tag{vii}
\end{equ}
we obtain 
\begin{equ}
\hat f_{\<BAB>}\app{vii} 0,
\qquad\qquad
\hat f_{\<BBB>}\app{vii} 0,
\end{equ}
and we momentarily postpone the effect of \eqref{vii} on $\hat f_{\<BBM>}$, $\hat f_{\<S5>}$.
\\
\emph{Eliminating $\<ABMvar>\;,\<BBMvar>$}

So far the coefficients of the symbols $\<S1>\;,\<S2>\;,\<S3>$ have not at all been simplified.
First, notice that
\begin{equ}
\hat f_{\<S1>}=2qa(a')^3\scal{1,0,1,1}^\rarr+\rem\asymp\rem,
\end{equ}
and similarly for $\<S2>\;,\<S3>$. The terms $\rem$ then only contain parameter derivatives of which at most two is $1$ and the rest is $0$.
From \eqref{eq:rec1}-\eqref{eq:rec2} it is easy to see that these terms are
\begin{equs}
\hat f_{\<S1>}
&\approx qa\sscal{0}_2^\rarr			\big(
p_c^3\sscal{0}_2^\rarr
+2a'p_c^2\sscal{1}_2^\rarr
+2(a')^2p_c\scal{1,1}^\rarr\big)
\\
&\qquad
+2qa(a')^2p_c\sscal{1}_2^\rarr			\scal{0,1}^\rarr
-qa(a')^2p_c\scal{1,0,0,1}^\rarr,
\\
\hat f_{\<S2var>}
&\approx -2qa^2\sscal{0}_3^\rarr			\big(
p_c^3\sscal{0}_2^\rarr
+a'p_c^2\sscal{1}_2^\rarr
+(a')^2p_c\scal{1,1}^\rarr\big)
\\
&\qquad
-2qa^2\sscal{1}_3^\rarr			\big(
a'p_c^2\sscal{0}_2^\rarr
+(a')^2p_c\scal{0,1}^\rarr\big)
\\
&\qquad
-2qa^2(a')^2p_c\sscal{1}_2^\rarr			\sscal{1}_2^\rarr			\scal{0}^\rarr
\end{equs}
Thus, from the cancellations
\begin{equs}[id viii]
\sscal{\ell}_2^\rarr			\scal{i,j}^\rarr\otimes\<S1>
- & \sscal{\ell}_2^\rarr			\scal{i,j}^\rarr\otimes\<ABMvar>\tag{viii}
\\
&\sim \big(a \sscal{\ell}_3^\rarr
+\ell\sscal{\ell-1}_3^\rarr \big)
			\scal{i,j}^\rarr
\otimes\<S2var>			
\end{equs}
we have
\begin{equs}
\hat f_{\<S1>}
&\app{viii}-qa(a')^2p_c\scal{1,0,0,1}^\rarr,
\\
\hat f_{\<S2var>}
&\app{viii}
-qa^2p_c^3\sscal{0}_5^\rarr
+2qa(a')^2p_c\sscal{0}_3^\rarr			
\scal{0,1}^\rarr
\\
&\qquad
-2qa^2\sscal{1}_3^\rarr			
a'p_c^2\sscal{0}_2^\rarr
-2qa^2(a')^2p_c\sscal{1}_2^\rarr			\sscal{1}_2^\rarr			\scal{0}^\rarr,\label{whoa}
\end{equs}
as well as
\begin{equs}[day745]
\hat f_{\<ABMvar>}	&	\app{viii}
-q\big[\big(2ap_c^2-2a'p_c\big)\sscal{0}_3^\rarr
+ 2 a a' p_c \sscal{1}_3^\rarr
- 2 (a')^2 \scal{0,0,1}^\rarr  \big]  			
\big( p_c \scal{0}^\rarr + a' \scal{1}^\rarr\big)
\\
&\qquad
+qa\sscal{0}_2^\rarr			\big(
p_c^3\sscal{0}_2^\rarr
+2a'p_c^2\sscal{1}_2^\rarr
+2(a')^2p_c\scal{1,1}^\rarr\big)
\\
&\qquad
+2qa(a')^2p_c\sscal{1}_2^\rarr			\scal{0,1}^\rarr
\\
&\asymp
\big(-qap_c^3+2qa'p_c^2\big)\sscal{0}_4^\rarr
+2q(a')^2p_c	\sscal{0}_2^\rarr			\sscal{1}_2^\rarr
-2qaa'p_c^2		\sscal{1}_2^\rarr			\sscal{0}_2^\rarr
\end{equs}
where the last step consists of a simple (but somewhat lengthy) rearrangement of terms and 
the fact that $q(a')^3\scal{0,0,1,1}\asymp0$.
One can also rearrange \eqref{whoa} as
\begin{equs}
\hat f_{\<S2>}
&\approx
-qa^2\sscal{0}_2^\rarr			\big(
p_c^3\sscal{0}_3^\rarr
+2a'p_c^2\sscal{1}_3^\rarr
+2(a')^2p_c\scal{1}^\rarr			\sscal{1}_2^\rarr
\big)
\\
&\qquad
-2qa^2(a')^2p_c\sscal{1}_2^\rarr			\scal{0}^\rarr			\sscal{1}_2^\rarr
\\
&\qquad
+2q a (a')^2 p_c \scal{1,0}^\rarr			\sscal{0}_3^\rarr
+2q a^2 (a')^2 p_c \scal{1,0}^\rarr			\scal{0}^\rarr			\sscal{1}_2^\rarr.
\end{equs}
We also have from \eqref{eq:rec2}
\begin{equs}
\hat f_{\<S3>}
&\approx qa^3\sscal{0}_3^\rarr			\big(
p_c^3\sscal{0}_3^\rarr
+2a'p_c^2\sscal{1}_3^\rarr
+2(a')^2p_c\scal{1}^\rarr			\sscal{1}_2^\rarr
\big)
\\
&\qquad
+2qa^3(a')^2p_c\sscal{1}_3^\rarr			\scal{0}^\rarr			\sscal{1}_2^\rarr
-qa^3(a')^2p_c\sscal{1}_2^\rarr			\sscal{0}_2^\rarr			\sscal{1}_2^\rarr.
\end{equs}
Very similar to the above, we have the cancellations
\begin{equs}[id ix]
\sscal{\ell}_2^\rarr			\scal{i,j,k}^\rarr\otimes\<S2>
- & \sscal{\ell}_2^\rarr			\scal{i,j,k}^\rarr\otimes\<BBMvar>\tag{ix}
\\
&\sim \big(a \sscal{\ell}_3^\rarr
+\ell\sscal{\ell-1}_3^\rarr \big)
			\scal{i,j,k}^\rarr
\otimes\<S3>,			
\end{equs}
and so
\begin{equs}
\hat f_{\<S3>}
&\app{ix}
-qa^3(a')^2p_c\sscal{1}_2^\rarr			\sscal{0}_2^\rarr			\sscal{1}_2^\rarr
- 2q a^2 (a')^2 p_c \sscal{0}_4^\rarr			\sscal{1}_2^\rarr
\\
\hat f_{\<S2>}
&\app{ix}
2q a (a')^2 p_c \scal{1}^\rarr			\sscal{0}_4^\rarr
+2q a^2 (a')^2 p_c \scal{1,0}^\rarr			\scal{0}^\rarr			\sscal{1}_2^\rarr,
\end{equs}
and also, similarly to \eqref{day745} but keeping in mind the postponed contribution coming from \eqref{vii} to $\<BBMvar>$,
\begin{equs}[day771]
\hat f_{\<BBMvar>}
&\approx
-2q(a')^2p_c\sscal{0}_5+
\big(qa^2p_c^3-2qaa'p_c^2\big)\sscal{0}_5^\rarr
\\
&\qquad
-2qa(a')^2p_c	\sscal{0}_2^\rarr			\sscal{1}_3^\rarr
+2qa^2a'p_c^2		\sscal{1}_2^\rarr			\sscal{0}_3^\rarr.
\end{equs}
From \eqref{day745}-\eqref{day771} and the identity
\begin{equ}\label{x}
\sscal{i}_2^\rarr			\sscal{\ell}_2^\rarr
\otimes\<ABMvar>
\sim
\sscal{i}_2^\rarr			\big(a\sscal{\ell}_3^\rarr+\ell\sscal{\ell-1}_3^\rarr\big)
\otimes\<BBMvar>		\tag{x}
\end{equ}
we easily conclude
\begin{equ}
\hat f_{\<ABMvar>}\app{x} 0,
\qquad\qquad
\hat f_{\<BBMvar>}\app{x} 0.
\end{equ}
\\
\emph{Finishing up}

Notice next that all remaining terms of $\<S1>\;,\<S2>\;,\<S3>$
have $0$ derivatives on the bottom edges, so integrating by parts there is relatively straightforward.
Using
\begin{equ}\label{id xi}
\scal{i,0,i}^\rarr\otimes\<AAM>\sim a\scal{i,0,0,i}^\rarr\otimes\<S1>\;,		\tag{xi}
\end{equ}
we have, with introducing the shorthand $r=q (a')^2 p_c$
\begin{equs}
\hat f_{\<S1>}
&\app{xi} 0
\\
\hat f_{\<AAM>}
&\approx -q(a')^2\big(p_c\scal{0}^\rarr+a'\scal{1}^\rarr\big)			 \scal{1,1}^\rarr
\asymp -q(a')^2p_c\scal{0,1,1}^\rarr
\\
&\app{xi} -r\sscal{1}_2^\rarr			\scal{1}.
\end{equs}
Similarly we obtain, also recalling the postponed contribution from \eqref{vii} to $\<S5>\,$,
\begin{equs}
\hat f_{\<S3>}	&\approx 0,
\\
\hat f_{\<S5>}  & \approx 
-2q(a')^2p_c\sscal{0}_5
\\
&\qquad
-qa\big(	p_c \sscal{0}_2		+ a' \sscal{1}_2	\big)
			 \big(
a' p_c \sscal{0}_3 + 2 (a')^2 \scal{0,1,0} + 2 a (a')^2  \scal{1,1,0}
\big)
\\
&\qquad
- a^2 r \sscal{1}_2^\darr			\scal{0}^\darr			\sscal{1}_2^\darr
- a r \sscal{0}_2^\darr			\scal{0}^\darr			\sscal{1}_2^\darr
- a r \sscal{1}_2^\darr			\scal{0}^\darr			\sscal{0}_2^\darr
\\
&\asymp
-2r\sscal{0}_5-qaa'p_c^2\sscal{0}_5
\\
&\qquad
-a r  \big(
2 \sscal{1}_2^\darr			\sscal{0}_3^\darr
+2\sscal{0}_2^\darr			\scal{1}^\darr			\sscal{0}_2^\darr
+ \sscal{0}_2^\darr			\scal{0}^\darr			\sscal{1}_2^\darr
\big)
\\
&\qquad
-a^2 r
\sscal{1}_3^\darr			\sscal{1}_2^\darr.
\end{equs}
From the identity
\begin{equ}\label{xii}
\scal{j,k,0,i}^\darr\otimes\<BAM> + \scal{i,0,j,k}^\darr\otimes\<S4>
\sim
2 a \scal{i,0,0,j,k}^\rarr
\otimes\<S2>		\tag{xii}
\end{equ}
we have
\begin{equs}
\hat f_{\<S2>}
&\app{xii}	0
\\
\hat f_{\<BAM>}
&\app{xii}
q a (a')^2 \big( p_c \sscal{0}_2+ a' \sscal{1}_2\big)			\scal{1,1} 
\\
&\qquad
+q (a')^2 p_c \big(
\sscal{0}_2			\scal{0,1}
+ a \sscal{1}_2			\scal{0,1}\big)
\\
&\asymp
r \sscal{0}_3			\scal{1}
+ a r \sscal{1}_3			\scal{1},
\\
\hat f_{\<S4>}
&\app{xii}
q\big(	p_c \scal{0}		+ a' \scal{1}	\big)
			 \big(
a' p_c \sscal{0}_3 + 2 (a')^2 \scal{0,1,0} + 2 a (a')^2  \scal{1,1,0}
\big)
\\
&\qquad
+q (a')^2 p_c \big(
\scal{1,0}^\darr			\sscal{0}_2^\darr
+ a \scal{1,0}^\darr			\sscal{1}_2^\darr\big)
\\
&\asymp
q a' p_c^2 \sscal{0}_4
+r \big( 2 \scal{1,0}^\darr			\sscal{0}_2^\darr
+ 2 \scal{0,1}^\darr			\sscal{0}_2^\darr
\big)
\\
&\qquad
+ a r \sscal{1}_2^\darr			\sscal{1}_2^\darr.
\end{equs}
Now we have
\begin{equ}\label{id: xiii}
\sscal{1}_2			\scal{1}\otimes\<AAM>
\sim
\big(
a \sscal{1}_3
+
\sscal{0}_3
\big)
			\scal{1}
\otimes\<BAM>		\tag{xiii}
\end{equ}
which immediately yields
\begin{equ}
\hat f_{\<AAM>}\app{xiii} 0,
\qquad\qquad
\hat f_{\<BAM>}\app{xiii} 0.
\end{equ}
Finally, let us restate a version of \eqref{id iii} with the ordering $\scal{\cdot}^\darr$:
\begin{equ}\label{id xiv}
\sscal{\ell}_2^\darr			\sscal{i}_2^\darr\otimes\<S4>
\sim 
\big(
a\sscal{\ell}_3^\darr+\ell\sscal{\ell-1}_3^\darr
\big)
			\sscal{i}_2^\darr.
\otimes\<S5>	\tag{xiv}
\end{equ}
Using \eqref{id xiv}
first with $\ell=i=0$, then with $\ell=i=1$, and finally with $\ell=1$, $i=0$:
\begin{equs}
\hat f_{\<S5>}  & \app{xiv}
-2r\sscal{0}_5
\\
&\qquad
-a r  \big(
2 \sscal{1}_2^\darr			\sscal{0}_3^\darr
+2\sscal{0}_2^\darr			\scal{1}^\darr			\sscal{0}_2^\darr
+ \sscal{0}_2^\darr			\scal{0}^\darr			\sscal{1}_2^\darr
\big)
\\
&\qquad
-a^2 r
\sscal{1}_3^\darr			\sscal{1}_2^\darr
\\
&\app{xiv}
-2r\sscal{0}_5-2 ar \sscal{1}_3^\darr			\sscal{0}_2^\darr
\app{xiv}
0,
\\
\hat f_{\<S4>}	& \app{xiv}
r \big( 2 \scal{1,0}^\darr			\sscal{0}_2^\darr
+ 2 \scal{0,1}^\darr			\sscal{0}_2^\darr
\big)
+ a r \sscal{1}_2^\darr			\sscal{1}_2^\darr.
\\
&\app{xiv}
2r\sscal{1}_2^\darr			\sscal{0}_2^\darr
\app{xiv} 0.
\end{equs}
The proof is complete.\qed

\bibliography{quasi}{}
\bibliographystyle{Martin}

\end{document}